\documentclass{amsart}
\usepackage{amssymb}
\usepackage{verbatim}
\usepackage{array}
\usepackage{latexsym}
\usepackage{enumerate}
\usepackage{enumitem}
\usepackage{amsmath,mathtools}
\usepackage{mathrsfs}
\usepackage{amsfonts}
\usepackage{amsthm}
\usepackage[foot]{amsaddr}
\usepackage[table,svgnames,dvipsnames]{xcolor}
\usepackage{caption}
\usepackage[english]{babel}
\usepackage{graphicx}
\usepackage{tikz}
\usepackage{tikz-cd}
\usepackage{adjustbox}

\newcommand{\mycomment}[1]{}

\usepackage[autostyle]{csquotes}

\usepackage{hyperref}
\usepackage[capitalise]{cleveref}

\newtheorem{theorem}{Theorem}[section]

\newtheorem{lemma}[theorem]{Lemma}
\newtheorem{corollary}[theorem]{Corollary}
\newtheorem{definition}[theorem]{Definition}

\newtheorem{example}[theorem]{Example}

\newtheorem{remark}[theorem]{Remark}

\def\A{\mathbb A}
\newcommand{\Fq}{{\mathbb{F}_q}}

\newcommand{\F}{\mathbb{F}}
\newcommand{\Z}{\mathbb{Z}}

\renewcommand{\P}{\mathbb{P}}

\numberwithin{equation}{section}

\newcommand{\T}{\mbox{T}}

\newcommand{\calS}{\mathcal{S}}
\newcommand{\calZ}{\mathcal{Z}}
\newcommand{\calY}{\mathcal{Y}}

\newcommand{\Zgamma}{\calZ_\gamma}

\newcommand{\ZZ}{\overline{\calZ}_{\gamma}}

\DeclareMathOperator{\supp}{Supp}

\title[Hierarchical Locally Recoverable Codes on surfaces]{Hierarchical Locally Recoverable Codes on surfaces}
\date{}

\author[C. Araujo]{Carolina Araujo$^1$} \address{$^1$IMPA, Estrada Dona Castorina 110, 22460-320 Rio de Janeiro, Brazil} \email{caraujo@impa.br} \thanks{}
\author[L. Costa]{Luana Costa$^1$} \address{} \email{luana.costa@impa.br} \thanks{}
\author[B. Malmskog]{Beth Malmskog$^2$} \address{$^2$Colorado College Department of Mathematics and Computer Science, Colorado Springs, Colorado USA} \email{bmalmskog@colroadocollege.edu} \thanks{}
\author[J. Mello]{Jorge Mello$^3$} \address{$^3$Department of Mathematics and Statistics, Oakland University, Michigan, USA} \email{jorgedemellojr@oakland.edu} \thanks{}
\author[E. Menezes]{Eliza Menezes$^1$} \address{} \email{elizabeth.menezes@impa.br} \thanks{}
\author[C. Salgado]{Cecília Salgado$^4$} \address{$^4$Faculty of Science and Engineering - Bernoulli Institute, University of Groningen, Groningen 9747AG, The Netherlands} \email{c.salgado@rug.nl} \thanks{}
\author[L. Vicino]{Lara Vicino$^4$} \address{} \email{l.vicino@rug.nl} \thanks{}

\begin{document}

\begin{abstract}
We construct locally recoverable codes with hierarchy from surfaces in $\mathbb{A}^3$ admitting a fibration by curves of Artin-Schreier or Kummer type. We derive the parameters of our codes by leveraging the geometry and arithmetic of the fibration, which is obtained by projection onto one of the coordinates. As a byproduct, we obtain estimates for (and in one case an explicit count of) the number of rational points in certain families of surfaces.
\end{abstract}

\maketitle

\thanks{{\em Keywords}}: Locally Recoverable Codes, hierarchy, fibered surfaces, Artin-Schreier curves, Kummer curves

\thanks{{\em MSC codes}}: 11G20, 14G50, 94B27

\section{Introduction}

Error-correcting codes provide a method of incorporating thoughtful redundancy into information that is to be stored or transmitted over a potentially noisy channel, so that errors can be corrected or missing information recovered.  Different uses require codes with different strengths.  Cloud storage applications pose a unique challenge in that entire servers routinely fail or become unavailable, so information stored entirely on a single server is at risk of disappearing completely, no matter how well redundancy is built into individual codewords.  Simply backing up each server at least doubles storage overhead, making it desirable to find a more efficient way to spread each piece of information across many servers so that any missing data can be recovered from other servers.   \textit{Locally recoverable codes} (LRCs) are error-correcting codes with the property that, if any symbol of a codeword is erased, there is some other relatively small collection of code symbols that can be used to recover the missing value.  This collection of symbols is known as a \textit{recovery set} for the erased position. LRCs were introduced in \cite{PKLK12} and have been widely studied in recent years.  An LRC with \textit{hierarchy} (sometimes abbreviated as an HLRC) has a second, larger recovery set for each position, which may allow the recovery of additional erasures.  HLRCs, which were first introduced in \cite{sasidharan2015codes}, are one way to create a ``back-up plan" for local recovery, so that the erasure of a few positions can be repaired without needing to access most of the symbols in the codeword.

The rich geometric and algebraic structures of curves and higher dimensional varieties over finite fields can naturally give rise to evaluation codes on algebraic varieties with structures as locality and hierarchy. Algebraic curves have been key components in several interesting constructions of LRCs (see e.g.~\cite{tamobarg2014, barg2017locally, tafazoliantop2025}), HLRCs (see e.g.~\cite{ballentine2019codes} and \cite{haymaker2025algebraic}) and LRCs with availability (e.g.~in \cite{haymaker2018locally}). While codes on surfaces have been studied extensively, including some work on LRCs from surfaces (see e.g.~\cite{barg2017surfaces,SVAV2021,aguilar25,SV25}), the potential of surfaces, and in particular of surfaces admitting a fibration by curves with many $\Fq$-points, to naturally yield HLRCs has only recently emerged in the literature, pioneered in \cite{berg2024codes}. 
While products of curves provide natural examples of surfaces admitting fibrations, considering more general fibrations increases the range of possibilities and provides many more examples.

Our point of departure are affine surfaces of the form $\mathcal{S}:h(y)=f(x,z) \subset \mathbb{A}^3_{(x,y,z)}$ such that specialization in one of the variables, say $z$ for illustration purposes, yields a family of curves $\mathcal{Z}_{\gamma}: h(y)=f(x,\gamma)$ with many $\Fq$-points.
The evaluation points of the code is given by a carefully chosen subset $T$ of the set of $\Fq$-rational points of $S$. 
We then consider a vector space $V$ of polynomials $g(x,y,z)$, with suitable bounds on the degrees in $x$, $y$, and $z$, and the evaluation map $V\rightarrow \mathbb{F}_q^n$, where $n=|T|$. 
While the choice of the evaluation set and the construction of the vector space are naturally done in the affine setting, we use a (partial) compactification of $\mathcal{S}$ in $\mathbb{P}^2_{(x:y:w)}\times \mathbb{A}^1_{z}$ in order to apply B\'ezout's theorem on each curve $\calZ_\gamma$ and obtain estimates for the minimal distance.

More concretely, we consider an equation for $\mathcal{S}$ of the form \begin{itemize}
\item $y^p-y=f(x,z)$, where $p$ is prime and $q$ is a power of $p$, and $f(x,z)$ is a polynomial, or
\item $y^m=f(x,z)$, where $\gcd(q,m)=1$, $\Fq$ contains a primitive $m-$th root of unity, and $f(x,z)$ is a polynomial.
\end{itemize}
In the first case, the curve $\Zgamma$ defined by $y^p-y=f(x,\gamma)$ for some $\gamma\in\Fq$ is generically an Artin-Schreier curve, the projective closure of which is a degree-$p$ Galois cover of the projective line.  In the second case, the curve $\Zgamma$ defined by $y^m=f(x,\gamma)$ is generically a Kummer curve, the projective closure of which is a cyclic degree-$m$ Galois cover of the projective line.  
These are excellent curve families for our application because the structure of the curves in these families allows us to reasonably count their $\Fq$-rational points.

In this article, we treat each of these two classes of surfaces in detail and, for each of them, work out a concrete example with an explicit equation (see \cref{sec:ASexample} and \cref{sec:KummerEx}).
We observe that these two explicit examples exhibit special geometric features. In the special Artin-Schreier construction discussed in \cref{sec:ASexample},  the projectivization of the corresponding surface in $\P^3_{(x:y:z:w)}$ contains lines at infinity, namely $\{x=w=0\}$ and $\{z=w=0\}$.
The fibration described above, namely, the specialization in the variable $z$, corresponds to the rational map $\P^3\dashrightarrow \P^1$ given by projection from the line $\{z=w=0\}$.
One could also project from the other line at infinity, $\{x=w=0\}$, to obtain a different fibration by Artin-Schreier curves on the same surface. In the second example (\cref{sec:KummerEx}), the surface is the affine cone over a curve that is covered by the Hermitian curve, and its equation is symmetric in $x$ and $z$. Specialization in the variable $x$ gives another fibration by Kummer curves on the same surface.

\subsection{Outline of the article}

In \cref{background}, we introduce some background on evaluation codes and HLRCs, together with fundamental concepts from algebraic geometry and from function field theory that we need throughout the paper.
In \cref{sec:AS}, we introduce a general construction of HLRCs on surfaces with Artin–Schreier fibers and explicitly bound their parameters. In \cref{sec:ASexample}, we discuss, in our framework, an explicit example from \cite{berg2024codes}. Similarly, in \cref{sec:Kummer}, we present a general construction of HLRCs on surfaces with Kummer fibers, and in \cref{sec:KummerEx} we study an explicit example.
This work is of independent interest as we count points on a new family of surfaces using interesting geometric arguments.

\section{Notations and background}\label{background}

Throughout this paper, let $p$ be a prime number, $q=p^h$, for an integer $h>0$, and $\mathbb{F}_q$ a finite field with $q$ elements. Denote its multiplicative group by $\mathbb{F}_q^*$. For a positive integer $n\in\Z_{>0}$, we write $[n]\coloneqq\{1,...,n\}$.

\subsection{Codes}

First, we recall some notions on coding theory.
\begin{definition}
    If $x=(x_1,x_2,\dots, x_n)$ and $y=(y_1,y_2,\dots, y_n)$ are vectors in $\F_q^n$, their \textit{Hamming distance} is the integer $d(x,y):=|\{i:x_i\neq y_i\}|$
    given by the number of their distinct entries. The \textit{Hamming weight} $w(x)$ of a vector $x\in\F_q^n$ is the number of its nonzero entries, that is, $w(x):=d(x,0)$,
    and the \textit{minimum distance} of a subset $S\subseteq \F_q^n$ is $d(S):=\min\{d(x,y): x\neq y \text{ in }S\}.$
\end{definition}

\begin{definition}
    Let $n,k,d\in \mathbb{Z}_{>0}$ such that $k,d\leq n$. A \textit{linear code} $C$ over $\F_q$, of length $n$, dimension $k$ and minimum distance $d$, is an $\mathbb{F}_q$-linear subspace of $\F_q^n$ with $\dim C =k$ and $d(C)=d$. We refer to $C$ as an $[n,k,d]$-code and to its elements as \textit{codewords}.
\end{definition}
 Note that, as a vector space, $C$ has minimum distance equal to its minimum Hamming weight, i.e., $d=\min\{d(x,0):0\neq x\in C\}.$
Moreover, we can obtain a new code from $C$ by restricting to a subset of code positions.

\begin{definition}
    If $I=\{a_1,a_2,\dots, a_{n_1}\}\subseteq[n]$, with $n_1\leq n$ and $a_i< a_{i+1}$ for all $i\in [n_1]$,  define the puncture of a codeword $x$ on $I$ as
    $x_I=(x_{a_1},x_{a_2},\dots, x_{a_{n_1}})$  and the puncture of $C$ on $I$ as $C_I=\{x_I:x\in C\}$. We call $I$ the support of $C_I$, relative to $C$, and denote it by $\supp C_I$.
\end{definition}
In the literature, the term puncturing sometimes assumes that the resulting code has the same dimension as the original. We do not make this assumption here. It is not hard to see that the puncture $C_I$ of an $[n,k,d]$-code $C$ is again a linear code and that $C_I$ is an $[n_1,k_1,d_1]$-code with $n_1\leq n$, $k_1\leq k$ and $d_1\leq d$.
The notion of punctured codes is central to locally recoverable codes. 

\begin{definition} [\cite{PKLK12}]\label{LRCdef2} Let $n, r, \rho\in \mathbb{Z}_{>0}$. A linear code $C$ of length $n$ over $\mathbb{F}_q$ is \textit{$(r,\rho)$-locally recoverable} if, for each $i\in[n]$, there exists a punctured code $C_i$ of length at most $r+\rho-1$ and minimum distance at least $\rho$, such that the support of $C_i$ contains $i$. The support of $C_i$ and $\supp C_i \setminus \{i\}$ are respectively called a \textit{repair group} and a \textit{recovery set} for position $i$. Sometimes we say that $C$ has $(r,\rho)$-\textit{locality}.
\end{definition}

Note that $\supp C_i$ is a repair group for all $j\in\supp C_i$ and that the dimension of the punctured code is at most $r$, because $r$ coordinates are sufficient to determine the other ones.
This definition of a locally recoverable code generalizes the idea in the introduction by allowing the recovery of $\rho-1$ erasures within any repair group.  The single erasure case is covered by $\rho=2$.  
Note here that $r$ is the dimension of the lower code $C_i$ and $\rho$ is its minimum distance.

Locally recoverable codes with hierarchy were first studied in \cite{sasidharan2015codes}. We present here the definition of HLRCs introduced there, with some slight modifications.

\begin{definition} [\cite{sasidharan2015codes}]\label{HLRCdef}
Let $n, n_1, n_2, k, k_1, k_2, d, d_1, d_2\in\mathbb{Z}_{>0}$ with $n_2<n_1\leq n$, $k_2\leq k_1\leq k$ and $d_2<d_1\leq d$.  A linear $[n,k,d]$-code $C$ is said to have \emph{hierarchical locality} with local parameters $((n_1, k_1, d_1), (n_2,  k_2, d_2))$ if, for each $i\in[n]$, there exists a punctured code $C_{1,i}$ of length at most $n_1$, $\dim C_{1,i}\leq k_1$ and minimum distance $d(C_{1,i})\geq d_1$, with $i \in \supp C_{1,i}$, such that $C_{1,i}$ has $ (k_2,d_2)$-locality with repair groups of size at most $n_2$. We denote by $C_{2,i}$ the puncture of $C_{1,i}$ on its repair group for position $i$, so that $C_{2,i}$ has length at most $n_2$, $\dim C_{2,i} \leq k_2$, $d(C_{2,i})\geq d_2$ and $i\in \supp C_{2,i}$.

Locally Recoverable Codes with hierarchical locality are referred to as \emph{HLRCs}. 
\end{definition}

\begin{remark}
    We have listed $n_1$ and $n_2$ to be upper bounds on the lengths the of the corresponding codes to encompass the situation in which different positions have recovery groups of different sizes.  The maximum smaller recovery group size for any position would be $n_2$ and the maximum length of a punctured code $C_{1,i}$ for any $i$ would be $n_1$.
\end{remark} 
\begin{remark}
    \label{rem:middle:lower:codes}
    Throughout the paper, we refer to the code $C_{1,i}$ (resp. $C_{2,i}$) in \cref{HLRCdef} as the \emph{middle} (resp. \emph{lower}) \emph{code} for position $i$. We choose this terminology, which differs from \cite{sasidharan2015codes}, for a more convenient discussion in our exposition, as it will be clear in the remaining sections of the paper.
\end{remark}
 
We conclude this subsection by recalling the definition of evaluation codes on a variety $\calS$ over $\mathbb{F}_q$, as this is the kind of codes we deal with in the paper.
\begin{definition}
    \label{defn:evalcode}
    Let $\calS$ be a quasi-projective variety over $\mathbb{F}_q$. 
    Consider a set $ T=\{P_1, \dots, P_n\}$
    of points in $\calS(\mathbb{F}_q)$, and an $\mathbb{F}_q$-vector space of functions in the function field $\mathbb{F}_q(\calS)$ with no poles in $T$. The \emph{evaluation code} of $V$ in $T$ is the $\mathbb{F}_q$-vector space
    \[
    C(T,V)\coloneqq \{(f(P_1),\dots, f(P_n)):f\in V\}\subseteq \mathbb{F}_q^n.
    \]
In other words, $C(T,V)$ is defined as the image of the evaluation map
    $ev_T : V \to \mathbb{F}_q^n$, where for each $f\in V$, $ev_T(f) := (f(P_1),\dots, f(P_n))$.
\end{definition}
Note that the codes defined in \cref{defn:evalcode} are in the class of Algebraic Geometry (AG) codes, since they are given by evaluating functions in the function field of an algebraic variety at points of the variety.

\subsection{Algebraic geometric tools}
\label{subsec:agtools}
In this paper, the evaluation sets defining our codes consist of points on surfaces in the affine space $\A^3_{(x,y,z)}$, with coordinates $(x,y,z)$.  However, an important purpose of this work is to use projective spaces and evaluate functions on points at infinity to improve code parameters. For $\gamma\in\F_q$, we identify the plane $z=\gamma$ in $\A^3_{(x,y,z)}$ with the 2-dimensional affine space $\mathbb{A}^2_{(x,y)}$ and refer to it as $\A^2_{\gamma}$.  Then, we take the projective closure of $\A^2_{\gamma}$, denote it by $\mathbb{P}^2_\gamma$ and use homogeneous coordinates $(x:y:w)$.
Another fundamental tool is to consider fibers of projection maps. To denote these maps, we use $\pi$ with subscripts indicating the coordinates to which they project. For example, define
\begin{align*}
 \pi_x : \mathbb{A}^3_{(x,y,z)} &\to \mathbb{A}^1_x & \pi_{x,z}: \mathbb{A}^3_{(x,y,z)} &\to \mathbb{A}^2_{(x,z)} \\
     (\alpha,\beta,\gamma)&\mapsto \alpha     &     (\alpha,\beta,\gamma)&\mapsto (\alpha,\gamma)
\end{align*}

When working on $\mathbb{P}^2_{\gamma}$, some basic intersection theory will be useful. The following definition and theorem are both from \cite[Section 5.14]{GW}.
\begin{definition}
    Let $K$ be any field. Consider two plane curves $C,D\subset \mathbb{P}^2(K)$ such that their intersection $X= C\cap D$ has dimension $0$. Let $\Gamma(X,\mathcal{O}_X)$ denote the space of global sections of the structure sheaf $\mathcal{O}_X$ of $X$. 
    Then we call $i(C,D)\coloneqq \dim_K\Gamma(X,\mathcal{O}_X)$ the \emph{intersection number} of $C$ and $D$.
    For $P\in X$, we call $i_P(C,D)\coloneqq \dim_K\mathcal{O}_{X,P}$ the \textit{intersection number} of $C$ and $D$ at $P$, where $\mathcal{O}_{X,P}$ denotes the local ring of $X$ at $P$.
\end{definition}

\begin{remark}
    Observe that $\dim X =0$ if and only if $C$ and $D$ are given by homogeneous polynomials without a common factor. In this case, we have
    \[
    i(C,D)=\sum_{P\in X}i_P(C,D).
    \]
\end{remark}

\begin{theorem}[B\'ezout]
    If $C$ and $D$ are plane curves in $\mathbb{P}^2(K)$ given by homogeneous polynomials $g$ and $h$ without a common factor, then $i(C,D)=(\deg g)(\deg h).$
\end{theorem}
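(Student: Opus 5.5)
The plan is to go through the standard proof of B\'ezout's theorem in this scheme-theoretic form. It reduces the count to a Hilbert polynomial computation for the quotient ring, and handles the algebraic closure by flat base change.

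\emph{Reduction to an algebraically closed (or infinite) field.} Both sides are invariant under extending scalars from $K$ to $\overline{K}$. The degrees of $g$ and $h$ do not change, and neither does coprimality, since the gcd of two polynomials is unchanged by field extension. For the left-hand side, $\Gamma(X_{\overline K},\mathcal{O})=\Gamma(X,\mathcal{O}_X)\otimes_K\overline K$ because $X$ is finite, hence affine, and cohomology commutes with flat base change. So the $K$-dimension equals the $\overline K$-dimension, and we may assume $K$ is infinite.

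\emph{Choosing an affine chart.} Since $X$ is finite and $K$ is infinite, a linear change of coordinates lets us assume that the line $\{w=0\}$ misses $X$. Then $X$ sits in the affine chart $\mathbb{A}^2$, and
\[
\Gamma(X,\mathcal{O}_X)=K[x,y]/(g_*,h_*),
\]
where $g_*,h_*$ are the dehomogenizations. The task is now to show $\dim_K K[x,y]/(g_*,h_*)=(\deg g)(\deg h)$.

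\emph{Graded computation.} Set $S=K[x,y,w]$, $d=\deg g$ and $e=\deg h$. Since $g$ and $h$ are coprime in the UFD $S$, they form a regular sequence. The Koszul complex then gives an exact sequence
\[
0\to S(-d-e)\to S(-d)\oplus S(-e)\to S\to S/(g,h)\to 0 .
\]
From it, $\dim_K (S/(g,h))_t = \binom{t+2}{2}-\binom{t-d+2}{2}-\binom{t-e+2}{2}+\binom{t-d-e+2}{2}$, and this equals $de$ for all $t\ge d+e-2$.

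\emph{Comparison of graded and affine.} This is the main obstacle. Because $\{w=0\}$ misses $X$, the element $w$ is a nonzerodivisor on $S/(g,h)$; one can also work with the saturation, which has the same high-degree pieces. Multiplication by $w$ therefore gives injections $(S/(g,h))_t\hookrightarrow (S/(g,h))_{t+1}$, and these are isomorphisms once $t\gg0$ by equality of dimensions. Dehomogenization identifies $\varinjlim_t (S/(g,h))_t$ (transition maps $\cdot w$) with $K[x,y]/(g_*,h_*)$. Hence that $K$-dimension is $de$.

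To justify that $w$ is a nonzerodivisor, I would use that $S/(g,h)$ is Cohen–Macaulay of dimension one, so its associated primes are the minimal primes, namely the points of $X$, and none of them contains $w$. Alternatively, one can argue directly that $wF\in(g,h)$ implies $F\in(g,h)$, using the regular sequence $g,h,w$: the restrictions of $g$ and $h$ to the line $w=0$ are coprime because $X\cap\{w=0\}=\emptyset$.
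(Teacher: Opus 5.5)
Your proof is correct and complete. It is the classical argument, Fulton's proof recast scheme-theoretically: reduce to an infinite field, move a line off the finite scheme $X$ (finiteness coming from the coprimality remark preceding the statement), use that $w$ is a nonzerodivisor on $S/(g,h)$ to identify $K[x,y]/(g_*,h_*)$ with $(S/(g,h))_t$ for $t\ge d+e-2$, and read off $de$ from the Koszul resolution. The paper gives no proof of its own, since it quotes B\'ezout's theorem from G\"ortz--Wedhorn, so there is no alternative route to compare against.
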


\subsection{Artin-Schreier and Kummer Covers}\label{ss:ArtinSchreier&KummerCovers}

In this subsection, we briefly recall some concepts in the language of one-variable function fields, since this paper makes extensive use of two special types of Galois function field extensions. We refer to \cite{Sti} for a comprehensive exposition of the theory of one-variable function fields.

Let $K$ be any perfect field and $x$ be a transcendental element over $K$. An \emph{algebraic function field} $F/K$ \emph{of one variable} over $K$, often simply called a \emph{function field}, is a finite extension of $K(x)$. The field $K$ is called the \emph{constant field} of $F$, and when it is understood, we usually just write $F$ instead of $F/K$. Let now $\Phi$ be a fixed algebraically closed field containing $F$, and let $F'/K'$ be some other function field such that $\Phi \supseteq F'$.

The function field $F'/K'$ is said to be an \emph{algebraic extension} of $F/K$ if $F' \supseteq F$ is an algebraic field extension and $K' \supseteq K$. As in \cite{Sti}, we adopt the notation $F'/F$ for function field extensions.

A fundamental invariant of a function field $F/K$ is its \emph{genus} $g$, which coincides with the dimension, as a vector space over $K$, of the space of regular differentials of $F$. Note that this is exactly the dimension of the Riemann-Roch space associated to any canonical divisor of $F$. 

Function fields having many rational places with respect to their genus, have been, and still are, extensively studied in the literature, not only because they are particularly interesting as extremal objects, but also because of their applications in the construction of evaluation codes (AG codes) with good parameters. The celebrated Hasse-Weil bound gives both a lower and an upper bound on the number of rational places of a function field over $\Fq$.
\begin{theorem}[Hasse-Weil Bound]\label{thm:HasseWeil}
    Let $F/\mathbb{F}_q$ be a function field of genus $g$ over the finite field $\mathbb{F}_q$. The number $N(F)$ of $\mathbb{F}_q$-rational places of $F$ satisfies the inequality
    \[
    |N(F)-(q+1)|\leq 2g\sqrt{q}.
    \]
\end{theorem}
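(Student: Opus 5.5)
The plan is the classical route through the zeta function of $F/\mathbb{F}_q$, with the hard analytic input supplied by the Stepanov--Bombieri argument. Let $N_r$ denote the number of rational places of the constant field extension $F_r = F\mathbb{F}_{q^r}$. First I would recall rationality of the zeta function, which is a consequence of Riemann--Roch. Concretely, $Z(t)=\sum_{D\geq 0} t^{\deg D}$ equals $L(t)/((1-t)(1-qt))$, where $L(t)\in\mathbb{Z}[t]$ has degree $2g$ and the functional equation $L(t)=q^g t^{2g}L(1/qt)$ holds.

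Factor $L(t)=\prod_{i=1}^{2g}(1-\alpha_i t)$. Taking logarithmic derivatives then gives, for all $r\geq 1$,
\[
N_r = q^r+1-\sum_{i=1}^{2g}\alpha_i^r .
\]
By the functional equation, the map $\alpha\mapsto q/\alpha$ permutes the $\alpha_i$. The statement is therefore equivalent to the Riemann hypothesis $|\alpha_i|=\sqrt{q}$ for all $i$, since then $|N-(q+1)|=|\sum\alpha_i|\leq 2g\sqrt q$ for $r=1$.

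Next I would reduce the Riemann hypothesis to a one-sided bound of the shape $N_r \leq q^r + 1 + C q^{r/2}$, with $C$ independent of $r$, for infinitely many $r$ (say all even $r$ large enough). Suppose such a bound holds. Pass to a finite extension where it applies, and use the Galois-twist trick: the lower bound follows by applying the upper bound to the other rational places of suitable Galois subextensions, or to a Galois closure $E/F$ and its intermediate fields, and summing. Together these give $|\sum_i \alpha_i^r|\leq C' q^{r/2}$ for all $r$ in an arithmetic progression. A standard power-series argument on $\sum_r(\sum_i\alpha_i^r)t^r=\sum_i \alpha_i t/(1-\alpha_i t)$ then shows that this function has no poles in $|t|<q^{-1/2}$. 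Hence $|\alpha_i|\leq\sqrt q$, and the symmetry $\alpha\mapsto q/\alpha$ forces equality.

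The main obstacle is the one-sided estimate, which I would prove by Bombieri's version of Stepanov's method. Assume $q$ is a square with $q>(g+1)^4$, and fix a rational place $Q$. Build, from Riemann--Roch spaces $\mathcal{L}(mQ)$, a nonzero auxiliary function of the form $\sum_i f_i^{\sqrt q}\, g_i^{q}$. Choose it to vanish to order at least $\sqrt q$ at every rational place other than $Q$, using that $u\mapsto u^q$ fixes rational places and that $p$-th powers make the derivative conditions automatic. Counting its poles at $Q$ then bounds the number of zeros, which gives $N-1\leq q+(2g+1)\sqrt q$. The delicate step is the linear-independence lemma guaranteeing the auxiliary function is nonzero, together with the choice of $m$ balancing the dimensions from Riemann--Roch. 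I would follow Bombieri's parameter choice $m=\sqrt q-1$ at that step. Applying the estimate to $F_r$ for even $r$ then yields the required one-sided bound with $C=2g+1$.
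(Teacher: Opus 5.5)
The paper gives no proof of this statement. It quotes the Hasse--Weil bound as classical background, and the surrounding material refers to \cite{Sti}, so there is no argument in the paper to compare yours against. Your outline is the standard Bombieri--Stepanov proof presented there, and it is sound as a sketch. The main ingredients are in place: rationality and the functional equation of $Z(t)$, the reduction to $|\alpha_i|=\sqrt q$, and the auxiliary function $\sum_i u_i^{\sqrt q}x_i^{q}$ built from $\mathcal{L}((\sqrt q-1)Q)$ and $\mathcal{L}((2g+\sqrt q)Q)$. That function gives $N\le q+(2g+1)\sqrt q$ when $q$ is a square with $q>(g+1)^4$.

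The lower-bound step is only a pointer, and it is the one place to tighten. It cannot be skipped: an upper bound on the $N_r$ only gives $\sum_i\alpha_i^r\ge -Cq^{r/2}$. That inequality is compatible with a real reciprocal root $\alpha>\sqrt q$.

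\begin{itemize}
\item \emph{Choice of closure.} Take $E$ to be the Galois closure of $F$ over a rational subfield $\mathbb{F}_q(x)$ with $x$ separating, not a closure ``$E/F$''.
\item \emph{Constants.} Work over $\mathbb{F}_{q^r}$ with $r$ divisible by the degree of the constant field of $E$.
\item \emph{Exact count.} Use an identity that is exact up to ramified places. For example, let $G=\mathrm{Gal}(E/\mathbb{F}_q(x))$ and $H=\mathrm{Gal}(E/F)$. Let $N_\sigma$ be the number of places $\mathcal P$ of $E\bar{\mathbb{F}}_q$ with $\mathrm{Frob}(\mathcal P)=\sigma(\mathcal P)$, where $\mathrm{Frob}$ is the $q^r$-Frobenius. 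Then $\sum_{\sigma\in G}N_\sigma=|G|(q^r+1)+O(1)$ and $\sum_{\sigma\in H}N_\sigma=|H|\,N_r+O(1)$, with $O(1)$ bounded independently of $r$.
\item \emph{Conclusion.} Bombieri's argument applies verbatim to each twisted count $N_\sigma$ with $\sigma\notin H$, giving an upper bound with constant depending only on $g_E$. Subtracting these from the total yields $N_r\ge q^r+1-C'q^{r/2}$.
\end{itemize}

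There is also a smaller point in the final analytic step. The two-sided bound is known only for $r$ in $m\mathbb{Z}$ (large enough), so run the power-series argument on $\sum_k\bigl(\sum_i\alpha_i^{mk}\bigr)t^k$, that is, on the $\alpha_i^m$. Knowing the bound for ``infinitely many $r$'' would not be enough in general.
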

A function field $F/\mathbb{F}_q$ of genus $g$ is said to be \emph{maximal} if $N(F)=q+1+2g\sqrt{q}$. The (nonsingular) algebraic curve associated to a maximal function field is called \emph{maximal} as well.

\begin{example}
\label{ex:hermitian}
    One of the most famous examples of maximal function fields is the \emph{Hermitian function field}, that is, the $\F_{q^2}$-rational function field of the Hermitian curve. 

    Consider the affine model of the Hermitian curve $\mathcal{H}\subseteq \mathbb{A}^2_{(x,y)}$ given by the affine equation $y^{q+1}=x^q + x$
    over $\mathbb{F}_{q^2}$. The $\F_{q^2}$-rational function field of $\mathcal{H}$ is $H\coloneqq\mathbb{F}_{q^2}(x,y)$, with $y^{q+1}=x^q + x$, and it is a maximal function field of genus $g=q(q-1)/2$ over $\mathbb{F}_{q^2}$.
\end{example}

We now recall the definitions of two special types of Galois function field extensions, and we introduce the terminology of \emph{Kummer-type} and \emph{Artin-Schreier} surfaces, which we use throughout the paper.

\begin{definition}
    Let $\lambda$ be a natural number. Let $F/K$ be an algebraic function field where $K$ contains a primitive $\lambda$-th root of unit (with $n>1$ and $n$ relatively prime to the characteristic of $K$).  Suppose that $u\in F$ is an element satisfying $u\neq w^d$ for all $w\in F$ and $d|\lambda$, $d>1$.  Let $F'=F(y)$ with $y^{\lambda}=u$.  Then $F'/F$ is called a \textit{Kummer extension} of $F$.
\end{definition}

\begin{theorem}[{\cite[Proposition 3.7.3]{Sti}}]
With notation as above, let $F'/F$ be a Kummer extension of $F$.
\begin{itemize}
\item The extension $F'/F$ is Galois of degree $n$, with minimal polynomial $T^{\lambda}-u$.  The Galois group is cyclic, and the automorphisms of $F'/F$ are given by $\sigma(y)=\zeta y$, where $\zeta\in K$ is a $\lambda$-th root of unity.  
\item Let $P$ be a place of $F$ with $P'$ a place of $F'$ lying above $P$.  Then the ramification index of $P'$ is $e(P'|P)=\frac{\lambda}{r_P}$ and the different exponent is $d(P|P')=\frac{\lambda}{r_P}-1$, where $r_P:=\gcd(\lambda,v_P(u))$.
\end{itemize}
\end{theorem}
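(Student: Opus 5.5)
The plan is to reduce everything to the splitting of the polynomial $T^{\lambda}-u$ over $F$ and then to work locally at each place $P$. The main tools are Kummer theory for the Galois structure and a local computation in the completion $\hat F_P$ for the ramification data. Throughout I write $\lambda$ for the degree, which is the $n$ appearing in the statement.

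\textbf{Galois structure.} Let $\zeta\in K$ be a primitive $\lambda$-th root of unity. The roots of $T^{\lambda}-u$ are exactly $\zeta^i y$, so $F'=F(y)$ is a splitting field. Since $\lambda$ is prime to $\operatorname{char}K$, the polynomial is separable, and hence $F'/F$ is Galois.

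Each automorphism $\sigma$ sends $y$ to $\zeta^{j}y$ for some $j$. This gives an injective homomorphism $\operatorname{Gal}(F'/F)\to\mu_\lambda\cong\Z/\lambda$, so the Galois group is cyclic, of order some $m\mid\lambda$. Its fixed-field computation shows that $y^{m}$ is fixed by every automorphism, so $y^{m}\in F$.

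Set $w=y^m$ and $d=\lambda/m$. Then $u=w^{d}$. By the hypothesis on $u$, this forces $d=1$. Therefore the degree is $\lambda$, the polynomial $T^\lambda-u$ is irreducible (it is the minimal polynomial), and the automorphisms are exactly $y\mapsto\zeta y$ for every $\lambda$-th root of unity $\zeta$.

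\textbf{Ramification.} Fix a place $P$ of $F$ and write $v_P(u)=r_P\,a$ and $\lambda=r_P\,b$, where $\gcd(a,b)=1$ and $b=\lambda/r_P$.

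For the inequality $e\geq b$: take $P'$ over $P$ and apply $v_{P'}$ to $y^\lambda=u$. This gives $\lambda\, v_{P'}(y)=e\,v_P(u)$, that is, $b\,v_{P'}(y)=e\,a$. Since $\gcd(a,b)=1$, we get $b\mid e$.

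For the inequality $e\leq b$: choose integers $s,t$ with $sa+tb=1$ and a uniformizer $\pi$ at $P$. The element
\[
z=y^{s}\pi^{t r_P}\cdots
\]
is the candidate; concretely, I would first replace $u$ by $u\pi^{-\lambda k}$ to reduce to the case $0\le v_P(u)<\lambda$. Then I would write $u=\pi^{r_Pa}\varepsilon$ with $\varepsilon$ a unit, and set $y_1=y^{b}/\pi^{a}$. This satisfies $y_1^{r_P}=\varepsilon$.

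Adjoining an $r_P$-th root of a unit, with $r_P$ prime to the characteristic, is unramified: the discriminant of $T^{r_P}-\varepsilon$ is a unit at $P$. The remaining step is $F(y_1)\subset F'$, generated by $y$ with $y^b=\pi^a y_1$. Since $v(\pi^a y_1)=a$ is prime to $b$, this step is totally ramified of degree $b$ at every place above $P$, by the same valuation argument as above. Multiplicativity of $e$ in towers then gives $e(P'|P)=b$ exactly.

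\textbf{Different.} Because $e=b$ is prime to the characteristic, the ramification is tame. Dedekind's different theorem then gives $d(P'|P)=e-1=\lambda/r_P-1$.

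The main obstacle is the upper bound $e\le\lambda/r_P$. Its delicate point is the separation of the unramified $r_P$-part from the totally ramified $b$-part. I expect to handle it cleanly with the intermediate field $F(y^{b})$. One checks that $y^b$ satisfies $(y^b)^{r_P}=u$, and the argument above then goes through after the normalization of $u$.
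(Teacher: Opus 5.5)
Your proof is correct. The paper gives no argument of its own here and only cites Stichtenoth's Proposition 3.7.3, and your proof is the standard textbook argument for that proposition: Kummer theory via the embedding $\mathrm{Gal}(F'/F)\hookrightarrow\mu_\lambda$ for the Galois part, then the tower $F\subset F(y^{\lambda/r_P})\subset F'$ with an unramified bottom step and a totally ramified top step, and finally Dedekind's different theorem in the tame case.

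Two pieces of tidying remain. First, delete the abandoned fragment $z=y^{s}\pi^{tr_P}\cdots$; the normalization of $v_P(u)$ is also unnecessary. Second, record explicitly that $[F':F(y^{b})]\le b$. This caps $e(P'|P_1)$ at $b$, and together with $[F':F]=\lambda$ it gives $[F(y^b):F]=r_P$, so $T^{r_P}-\varepsilon$ really is the minimal polynomial of $y_1$, which is what your discriminant argument needs.
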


If the characteristic of $F$ is relatively prime to $\lambda$ and $F$ contains a primitive $\lambda$-th root of unity, then all cyclic Galois degree-$\lambda$ extensions of $F$ are Kummer extensions.  

In the case that $F=K(x)$ is a rational function field, and $u=f(x)\in F$ satisfies the conditions above, then the curve $C$ with affine model $y^\lambda=f(x)$ is known as a \textit{Kummer curve}, and the induced covering map $C\rightarrow\P^1_x$ is called a \textit{Kummer cover}.

Note that there are exactly $\lambda$ distinct points of $C$ lying above any unramified point of $\P^1_x$, and, considering that $\zeta\in K$, if any of these points are defined over a given extension of $K$ then all are.

If $F=K(x,z)$ is the function field of $\P^2_{x,z}$, and $u=f(x,z)$ satisfies the conditions above, then the surface $S$ with function field $F'$ and affine model $y^{\lambda}=f(x,z)$ is called a \textit{Kummer-type surface}.

\begin{definition}
    Let $F/K$ be an algebraic function field of characteristic $p>0$.  Suppose that $u\in F$ is an element that satisfies $u\neq w^p-w$ for all $w\in F$ and $F'=F(y)$ where $y^p-y=u$.  Then $F'/F$ is called an \textit{Artin-Schreier extension} of $F$.
\end{definition}
    
\begin{theorem}[{\cite[Proposition 3.7.8]{Sti}}]
With notation as above, let $F'/F$ be an Artin-Schreier extension. For each $P$ a place of $F$, define $m_P:=m$ if there exists some $z\in F$ satisfying $v_P(u-(z^p-z))=-m<0$ with $m\not\equiv0\pmod{p}$, and $m_P:=-1$ otherwise, that is, if $v_P(u-(z^p-z))\geq 0$ for some $z\in F$.  
\begin{itemize}
    \item The extension $F'/F$ is Galois of degree $p$. The automorphisms of $F'/F$ are given by $\sigma(y)=y+\nu$ for $\nu=0,1,\dots, p-1$.
    \item A place $P$ of $F$ is unramified in the extension if and only if $v_P(u-(z^p-z))\geq 0$ for some $z\in F$.
    \item The place $P$ is totally ramified if and only if there exists $z\in F$ satisfying $v_P(u-(z^p-z))<0$
\end{itemize}

\end{theorem}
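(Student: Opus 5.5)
The statement is the standard Artin--Schreier structure result, and I would prove it by first reducing to a normal form at each place and then reading off the ramification behaviour from valuations.

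\textbf{Galois part.} Let $\psi(T)=T^p-T-u\in F[T]$. Since $(y+\nu)^p-(y+\nu)=y^p-y$ for every $\nu\in\F_p$, the roots of $\psi$ are exactly $y,y+1,\dots,y+p-1$, all lying in $F'$. These roots are distinct, so $F'/F$ is the splitting field of a separable polynomial and hence Galois.

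Next I show that $\psi$ is irreducible. Suppose some root $y$ has degree $d<p$ over $F$. Then every $F$-conjugate of $y$ has the form $y+\nu$, so the trace satisfies $\mathrm{Tr}(y)=dy+c$ with $c\in\F_p$. Since $\mathrm{Tr}(y)\in F$, this gives $dy\in F$. If $0<d<p$, then $d$ is invertible in $\F_p$, so $y\in F$, and then $u=y^p-y$ contradicts the hypothesis. Hence $[F':F]=p$. The automorphisms send $y$ to the roots of $\psi$, which are $y+\nu$ for $\nu=0,\dots,p-1$.

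\textbf{Ramification.} Fix a place $P$ and replace $u$ by $u'=u-(z^p-z)$. This does not change the extension, since $y'=y-z$ satisfies $y'^p-y'=u'$.

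\emph{Unramified case.} Suppose $v_P(u')\ge 0$. Then $y'$ is integral over the valuation ring $\mathcal{O}_P$, with minimal polynomial $T^p-T-u'$. The derivative of this polynomial is $-1$, which is a unit. So locally $\mathcal{O}_P[y']$ is the integral closure, the different exponent is $0$, and $P$ is unramified.

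\emph{Negative valuation with $m\not\equiv 0\pmod p$.} Suppose $v_P(u')=-m<0$ with $p\nmid m$. Take $P'|P$ with ramification index $e=e(P'|P)$. Since $v_{P'}(u')=-em<0$, we get $v_{P'}(y')<0$, so $v_{P'}(y'^p)$ dominates and
\[
p\,v_{P'}(y')=-em.
\]
Because $\gcd(m,p)=1$, this forces $p\mid e$. Combined with $e\le p$, we get $e=p$, i.e.\ total ramification.

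\textbf{Main obstacle: the reduction step.} The remaining work is to show that every place falls into one of these two cases. I would argue by descent. Suppose $v_P(u')=-pk$ with $k>0$. Choose a local parameter $t$. Because the residue field of a perfect constant field extension is perfect, the leading coefficient of $u'$ is a $p$-th power. So there is $w\in F$, obtained by weak approximation, with
\[
v_P\bigl(u'-w^p\bigr)>-pk.
\]
Since $v_P(w)=-k$, we have $v_P(w)>v_P(w^p)=-pk$. Replacing $z$ by $z+w$ therefore strictly increases the valuation: $v_P\bigl(u'-(w^p-w)\bigr)>-pk$.

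Iterating, the valuation strictly increases at each step, so the process terminates. It ends either with a nonnegative valuation or with a negative valuation prime to $p$. Hence exactly one of the two cases above holds, and $m_P$ is well defined.

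This gives the dichotomy: $P$ is either unramified or totally ramified. Both the bullet on unramified places and the bullet on totally ramified places follow.
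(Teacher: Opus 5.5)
The paper gives no proof of this statement; it only cites Stichtenoth. Your argument is essentially the standard proof from that source:
\begin{itemize}
\item The descent is the normalization lemma stated just before the proposition there. Concretely, $w=t^{-k}w_0$ with $w_0\in\mathcal{O}_P$ lifting a $p$-th root of the residue class of $t^{pk}u'$ does the job, so no weak approximation is needed.
\item The unramified case is the different estimate $d(P'|P)\le v_{P'}(\varphi'(y'))=0$, combined with Dedekind's different theorem.
\item Total ramification comes from $p\,v_{P'}(y')=-e\,m$ with $p\nmid m$.
\end{itemize}
Your trace argument for irreducibility is fine.

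The one real problem is your final sentence. As printed, the third bullet omits the condition $m\not\equiv 0\pmod p$, and in that form it is false. For any place $P$, take $z=t^{-j}$, with $t$ a local parameter, $j\ge 1$ and $pj>-v_P(u)$. Then $v_P\bigl(u-(z^p-z)\bigr)=-pj<0$. So the printed criterion holds at every place and would make every place totally ramified. That contradicts the second bullet at each of the infinitely many places that are not poles of $u$.

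Your argument does not, and cannot, prove that printed version. What it actually proves is the correct criterion: $P$ is totally ramified if and only if some $z$ gives $v_P\bigl(u-(z^p-z)\bigr)=-m<0$ with $p\nmid m$, i.e.\ $m_P>0$. You should state this corrected form explicitly instead of asserting that the printed bullet follows.

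Two smaller points.
\begin{itemize}
\item The ``at most one'' half of ``exactly one of the two cases holds'' comes from the ramification results ($e=1$ versus $e=p\ge 2$), not from the descent. You should say so.
\item You assert but do not prove that $m_P$ is well defined. This is quick. Suppose $z_1,z_2$ give valuations $-m_1\neq -m_2$ with $p\nmid m_1m_2$, and put $w=z_1-z_2$. Then $w^p-w=\bigl(u-(z_2^p-z_2)\bigr)-\bigl(u-(z_1^p-z_1)\bigr)$, which has valuation $-\max(m_1,m_2)$, negative and prime to $p$. But $v_P(w^p-w)$ is either nonnegative or equal to $p\,v_P(w)$, a contradiction.
\end{itemize}
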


If $F$ has characteristic $p$, any cyclic field extension of degree $p$ is an Artin-Schreier extension.  

In the case that $F=K(x)$ is a rational function field, and $u=f(x)\in F$ satisfies the conditions above, then the curve $C$ with affine model $y^p-y=f(x)$ is known as an \textit{Artin-Schreier curve}, and the induced covering map $C\rightarrow\P^1_x$ is called an \textit{Artin-Schreier cover}.

Note that, considering the Artin-Schreier automorphisms, there are exactly $p$ points of $C$ lying above any unramified point of $\P^1_x$ in the cover, and if any of these $p$ points are defined over a given extension of $\F_p$, then all are.

If $F=K(x,z)$ is the function field of $\P^2_{x,z}$, and $u=f(x,z)\in F$ satisfies the conditions above, then the surface $S$ with function field $F'=F(y)$ and affine model $y^p-y=f(x,z)$ is called an \textit{Artin-Schreier surface}.

Our objects of study are surfaces with fibrations of Artin-Schreier and Kummer curves. The study of codes constructed from Artin–Schreier and Kummer curves is extensive, however, their higher-dimensional analogues remain comparatively underdeveloped. In \cite{berg2024codes}, the authors introduced a novel use of geometry and fibrations of surfaces into curves to obtain locality and hierarchy for codes. Their framework allows us to adapt the construction by studying the surface in a quasi-projective framework, which leads to HLRCs with improved minimum distance.

\section{Codes on Artin-Schreier surfaces}
\label{sec:AS}

In this section, we present a construction of HLRCs from Artin-Schreier surfaces.  With our approach, which revisits the one in \cite{berg2024codes}, we give a non-trivial lower bound for the minimum distance of the obtained codes that, under certain assumptions, improves the bound contained in \cite{berg2024codes}. When our bound is not better, we are still able to retrieve the bound from \cite{berg2024codes}. 

\subsection{Code construction: a quasi-projective approach}
\label{sub:ASgeneral} 

Let $q\coloneqq p^h$ be a positive power of some prime $p$.
Consider the surface in $\mathbb{A}^3_{(x,y,z)}$ defined by
\begin{equation}
\label{eq:AS:surface}
    \calS_f : y^p -y-f(x,z) =0,
\end{equation}
where $f\in \mathbb{F}_q[x,z]$ is a polynomial with $\deg_x f\geq p+1$ and $f(x,z)\neq g(x,z)^p-g(x,z)$ for any $g(x,z)\in \overline{\F_q}(x,z)$.
Furthermore, for $\gamma\in\mathbb{F}_q$, define the variety
\begin{equation}
\label{eq:AS:Zgamma}
    \Zgamma\coloneqq \begin{cases}
        y^p-y-f(x,z)=0\\
        z-\gamma=0
    \end{cases} \ ,
\end{equation}
which is the intersection of $\calS_f$ with the plane $\mathbb{A}^2_\gamma$, given by $z=\gamma$, in $\mathbb{A}^3_{(x,y,z)}$, see \cref{subsec:agtools}. In other words, generically, $\Zgamma$ is the curve defined by $y^p-y-f(x,\gamma)=0$ in $\mathbb{A}^2_\gamma$. Note that in some cases, the intersection will not be irreducible, but the fibers that we make use of will be geometrically irreducible curves.
As described in \cref{subsec:agtools}, we denote the projective closure of $\mathbb{A}^2_\gamma$ by $\mathbb{P}^2_\gamma$, with homogeneous coordinates $(x:y:w)$. Furthermore, we let $s\coloneqq \deg_xf$ and we denote by $F_\gamma(x,w)$ the homogenization of $f(x,\gamma)\in \mathbb{F}_q[x]$ with respect to the variable $w$. Then, the curve defined by 
\begin{equation}\label{eq: projective AS}
    \overline{\calZ}_\gamma:y^p w^{s-p}-yw^{s-1}-F_\gamma(x,w) = 0
\end{equation}
is the projective closure of $\Zgamma$ in $\mathbb{P}^2_\gamma$. Observe that $\ZZ$ has a unique point at infinity $P_\gamma\coloneqq(0:1:0)$ in $\mathbb{P}^2_\gamma$, which corresponds to the point $[(0:1:0),(\gamma, 1)]$ in the partial compactification of $S_f$ in $\mathbb{P}^2_{(x:y:w)}\times \mathbb{A}^1_z$.

In the following lemma, we compute the multiplicity of $P_\gamma$ on $\overline{\calZ}_\gamma$, together with the intersection multiplicities of $\overline{\calZ}_\gamma$ with the lines $w=0$ and $x=0$ at $P_\gamma$.

\begin{lemma}
\label{lemma:singularity}
If $s\geq p+1$, then $P_{\gamma}$ is a point in $\overline{\calZ}_\gamma$ of multiplicity $s-p$. Moreover, the line $w=0$ is the only tangent of $\overline{\calZ}_\gamma$ at this point and intersects it with multiplicity $s$, while the line $x=0$ intersects $\overline{\calZ}_\gamma$ at $P_\gamma$ with multiplicity $s-p$.
\end{lemma}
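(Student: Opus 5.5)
Dehomogenize at $P_\gamma$ by setting $y=1$; this gives affine coordinates $(x,w)$ centered at $P_\gamma=(0:1:0)$, and $\overline{\calZ}_\gamma$ becomes
\[
G(x,w)\coloneqq w^{s-p}-w^{s-1}-F_\gamma(x,w)=0 .
\]
The plan is to read off the lowest-degree homogeneous part of $G$, and then restrict $G$ to each of the lines $w=0$ and $x=0$ to compute the intersection multiplicities. These are standard local computations, and they agree with the definition of $i_P$ recalled in \cref{subsec:agtools}.

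\textbf{Multiplicity and tangent cone.} Write $f(x,\gamma)=\sum_{j=0}^{s}a_jx^j$. Then $F_\gamma(x,w)=\sum_j a_j x^j w^{s-j}$ is homogeneous of degree $s$ in $(x,w)$, as long as $a_s\neq 0$. So every monomial of $F_\gamma$ has total degree $s$. The term $w^{s-1}$ has degree $s-1$, and $w^{s-p}$ has degree $s-p$. Because $s\ge p+1$ and $p\ge 2$, we have $s-p<s-1<s$. The lowest-degree form of $G$ is therefore $w^{s-p}$, so the multiplicity of $P_\gamma$ is $s-p$. The tangent cone is $w^{s-p}=0$, so $w=0$ is the only tangent line.

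\textbf{Intersection with $w=0$.} Setting $w=0$ in $G$ gives $G(x,0)=-F_\gamma(x,0)=-a_sx^s$. This is a unit times $x^s$, so $i_{P_\gamma}(\overline{\calZ}_\gamma,\{w=0\})=\dim k[x,w]_{(x,w)}/(G,w)=s$.

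\textbf{Intersection with $x=0$.} Setting $x=0$ gives $G(0,w)=w^{s-p}-w^{s-1}-a_0w^s=w^{s-p}(1-w^{p-1}-a_0w^{p})$. The second factor is a unit in the local ring, so the intersection multiplicity is $s-p$.

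\textbf{Main obstacle.} The delicate point is the hypothesis $a_s\neq 0$, i.e.\ that $\deg_x f(x,\gamma)=s=\deg_x f$. This can fail for special $\gamma$, namely when the leading coefficient of $f$ in $x$ (a polynomial in $z$) vanishes at $\gamma$. I would handle this by assuming, or stating explicitly, that $F_\gamma$ is the degree-$s$ homogenization. That is the only reading under which the projective equation \eqref{eq: projective AS} is homogeneous of degree $s$ in $(x,y,w)$ and $\overline{\calZ}_\gamma$ is its projective closure with $P_\gamma$ as its unique point at infinity. Under this convention the computations above go through, and the other multiplicity claims do not depend on $a_0$.
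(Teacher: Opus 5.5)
Your proposal is correct and matches the paper's argument for the multiplicity and tangent cone: dehomogenize at $y=1$ and read off $w^{s-p}$ as the lowest-degree form. For the two intersection numbers you restrict $G$ directly to the lines, whereas the paper uses Bézout together with the observation that $P_\gamma$ is the only point on $w=0$, and the general fact that a non-tangent line meets the curve with multiplicity equal to the multiplicity of the point.

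Your caveat $a_s\neq 0$ is well taken. The paper's Bézout step needs it as well, since otherwise $w$ divides the equation, and it holds in the applications because the definition of $\Gamma$ requires $\deg_x f(x,\gamma)=s$.
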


\begin{proof}
Let $h(x,y,w) = y^pw^{s-p}-yw^{s-1} - F_\gamma(x,w)$ be the homogeneous polynomial defining $\overline{\calZ}_\gamma$ in $\mathbb{P}^2_\gamma$. Observe that $F_\gamma(x,w)$ is a sum of monomials (in $x$ and $w$) each of degree $s$. Since the point $P_\gamma$ is the origin in the affine chart $y=1$, to compute its multiplicity on $\overline{\calZ}_\gamma$ we dehomogenize $h(x,y,w)$ with respect to $y$, obtaining $w^{s-p}- w^{s-1} - F_\gamma(x,w)$, and look at the term of lowest degree in this expression. Since all the monomials appearing in $F_\gamma(x,w)$ are of degree $s$, it follows that the lowest-degree term is $w^{s-p}$, which shows that $P_\gamma$ is a point of multiplicity $s-p$ in $\overline{\calZ}_\gamma$ and that the only tangent of $\overline{\calZ}_\gamma$ at $P_\gamma$ is the line $w=0$. In particular, notice that if $s=p+1$ then $P_\gamma$ is a nonsingular point of $\overline{\calZ}_\gamma$ with tangent line $w=0$, while if $s>p+1$ then $P_\gamma$ is singular, but still only has $w=0$ as a tangent. 

From the equation defining $\overline{\calZ}_\gamma$, we see that $P_\gamma$ is the only point of the intersection of $\overline{\calZ}_\gamma$ with the line $w=0$. By B\'ezout theorem, the intersection multiplicity of $w=0$ and $\overline{\calZ}_\gamma$ at $P_\gamma$ is precisey $s$, the degree of $\overline{\calZ}_\gamma$. On the other hand, for any other line $\ell$ through $P_\gamma$, since $\ell$ is not tangent to $\overline{\calZ}_\gamma$ at $P_\gamma$, the intersection multiplicity of $\ell$ and $\overline{\calZ}_\gamma$ at $P_\gamma$ is $s-p$, the multiplicity of $P_\gamma$ as a point of $\overline{\calZ}_\gamma$.
\end{proof}

We now define a family of HLRCs as evaluation codes from the surface $\mathcal{S}_f$.
\begin{definition} \label{def:main:artinschreier} 
Let $f\in\F_{q}[x,z]$ with $\deg_x(f(x,z))=s$, where $s$ is an integer $s\geq p+1$.
    Let $\calS_f$ be the surface defined by $y^p-y=f(x,z)$ as in \cref{eq:AS:surface}, and $\Zgamma$ be defined as in \cref{eq:AS:Zgamma}. Let $\eta$ be a positive integer with \[0\leq \eta \leq \max\{|\pi_x(\Zgamma(\F_q))|:\gamma\in\F_q,\deg_x(f(x,\gamma))=s\}\] and let $\Gamma\subseteq\F_q$ denote the subset 
    \begin{equation}\label{eqn:Gamma}
        \Gamma := \{ \gamma \in \F_q : 
        \pi_x(\Zgamma(\F_q))\geq \eta \textrm{ and }\deg_x(f(x,\gamma))=s
        \}.
    \end{equation}

    Let $\rho_1\in[\eta]$, $\rho_2\in [p]$, and $\rho_3\in[|\Gamma|]$ be positive integers such that
    \begin{enumerate}
    \item $\rho_1 \ge 2$, $\rho_2 \ge 2$, and $\rho_3\ge 1$;
    \item $p\eta\geq s(\eta - \rho_1 + p-\rho_2)-(s-p)(\eta-\rho_1)+1=s(p-\rho_2) +p(\eta-\rho_1)+1$.
    \end{enumerate}

    Define an evaluation code (see \cref{defn:evalcode}) $C^{f} := C(T,V)$ with evaluation set
    \begin{equation}\label{eqn:T}
        T:=\{(x,y,z)\in \calS(\F_q): z\in\Gamma\} = \bigcup_{\gamma\in\Gamma}\Zgamma(\mathbb{F}_q),
    \end{equation}
and vector space of evaluation polynomials
    \begin{equation}\label{eqn:V}
        V=V_{\rho_1,\rho_2,\rho_3}:=\langle x^iy^jz^k:
        0\leq i\leq \eta-\rho_1, 
        0\leq j\leq p-\rho_2, 
        0\leq k \leq |\Gamma|-\rho_3 
        \rangle.
    \end{equation} 
    \end{definition}
    
    In the following subsection, we prove that a code $C^{f}$ has locality and hierarchy, and we discuss its dimension, minimum distance and rate.

\begin{remark}
    \label{rem:AS:length}
    By construction, the length of a code $C^f$ as in \cref{def:main:artinschreier} is $n\coloneq |T|\geq p \eta |\Gamma|$. Indeed, let $\eta$ be such that each $\Zgamma$ has at least $\eta p$ affine points. Since $\Zgamma$ is an Artin-Schreier curve, this is equivalent to having $\eta$ or more distinct $x$-coordinate values for the affine points of $\Zgamma$. Therefore, $|T|\geq p \eta |\Gamma|$, since $|\pi_x(\Zgamma)|\geq \eta$ and $\pi_z(T)=\Gamma$.
\end{remark}

\begin{remark}
    \label{rem:AS:rho1}
    As it will be clear from the discussion in \cref{subsec:AS:parameters}, condition $(2)$ in \cref{def:main:artinschreier} is needed to ensure that a certain lower bound on the minimum distance $d$ of $C^f$ is positive. However, it should be noted that, even if the condition is dropped, a non-trivial (positive) lower bound on the minimum distance still holds. More precisely, in the proof of \cref{thm:AS:parameters}, we show that the lower bound $d \geq \rho_1\rho_2\rho_3$ always holds, and from \cref{rem:mindistbounds:comparison} we deduce that, if $\rho_1 \geq s$, then the bound proved in \cref{cor:mindist} is larger than or equal to $\rho_1\rho_2\rho_3$. 
\end{remark}

\subsection{Dimension, locality and lower bounds for the minimum distance} 
\label{subsec:AS:parameters}

In this subsection, we discuss the parameters of a code $C^{f}$ as in \cref{def:main:artinschreier}. We start by computing a bound on the minimum distance $d$ of $C^{f}$ by applying B\'ezout's theorem on each fiber $\ZZ$ and subtracting the minimum vanishing multiplicity of the functions in $V$ at $P_\gamma$, for all $\gamma\in \Gamma$. 

Let $M\coloneqq\eta + p -\rho_1-\rho_2$ and consider the $\mathbb{F}_q$-vector space of (homogeneous) forms
\begin{equation}
\label{eq:V:homog}
    V_H=\langle x^iy^jw^\ell:0\leq i\leq \eta-\rho_1, 0\leq j\leq p-\rho_2, i+j+\ell=M\rangle.
\end{equation}
Note that, when $g\in V$ is restricted to $\mathbb{A}^2_\gamma$, it yields a function $g_{\gamma}(x,y):=g(x,y,\gamma)$, which can be written as $g_{\gamma}=G_\gamma(x,y,1),$
where $G_\gamma\in V_H$ is the homogenization of $g_\gamma$ with respect to some variable $w$. It is then clear that the vanishing of $G_\gamma$ on affine points of $\ZZ$ with $w=1$ is the same as that of $g_{\gamma}$ on $\Zgamma$. Furthermore, the vanishing multiplicity of $g\in V$ at $P_\gamma$ is also described by the vanishing multiplicity of $G_\gamma$ at $P_\gamma$, which is exactly the intersection multiplicity of the curve defined by $G_\gamma$ and $\ZZ$ at the origin, in the affine chart with $y=1$ of $\P^2_\gamma$. In the light of this, the following lemma computes the minimum vanishing multiplicity of any function in $V$ at $P_\gamma$, for any $\gamma\in \Gamma$.

\begin{lemma}
\label{lemma:vanishing:at:infty}
    Let $\ZZ$ and $P_\gamma$ be as defined above, and let $V_{H}$ be the $\mathbb{F}_{q}$-vector space of (homogeneous) forms defined in \cref{eq:V:homog}.
    Let $\mathcal{G}$ denote the curve in $\mathbb{P}^2_\gamma$ defined by the vanishing of the form $G \in V_{H}\setminus \{0\}$.
    Then 
    \begin{equation*}
        \mathrm{min}_{G\in V_{H}\setminus \{0\}}\{i_{P_\gamma}(\mathcal{G}, \ZZ)\}=(s-p)(\eta-\rho_1).
    \end{equation*}
\end{lemma}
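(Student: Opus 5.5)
The plan is to compute $i_{P_\gamma}(\mathcal{G},\ZZ)$ in the affine chart $y=1$ of $\mathbb{P}^2_\gamma$, where $P_\gamma$ is the origin with local coordinates $(x,w)$. There $\ZZ$ is given by $w^{s-p}-w^{s-1}-F_\gamma(x,w)=0$, and a form $G=\sum c_{ij}x^iy^jw^{M-i-j}\in V_H$ dehomogenizes to $\sum c_{ij}x^iw^{M-i-j}$. I would prove the equality by exhibiting a form attaining the value and then showing the value is a lower bound for every nonzero form.

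\emph{Upper bound.} Take $G=x^{\eta-\rho_1}y^{p-\rho_2}w^{0}$, which lies in $V_H$ since $i+j=M$. Its local equation at $P_\gamma$ is $x^{\eta-\rho_1}$. By additivity of intersection multiplicities and \cref{lemma:singularity},
\[
i_{P_\gamma}(\mathcal{G},\ZZ)=(\eta-\rho_1)\,i_{P_\gamma}(\{x=0\},\ZZ)=(\eta-\rho_1)(s-p).
\]

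\emph{Lower bound.} I would use the branches (places) $Q$ of $\ZZ$ centred at $P_\gamma$, with valuations $v_Q$, and the standard formula $i_{P_\gamma}(\mathcal{G},\ZZ)=\sum_Q v_Q(G(x,1,w))$. This sum is valid because no branch is contained in $\mathcal{G}$: the affine part of $\ZZ$ is the irreducible Artin--Schreier curve.

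The key step is to control $v_Q(w)/v_Q(x)$ on each branch. Since $\deg_x f(x,\gamma)=s$, the monomial $x^s$ appears in $F_\gamma$ with nonzero coefficient. Every other monomial of the local equation has the form $x^aw^b$ with $a+b=s$, $b\ge1$, or is $w^{s-1}$. The Newton polygon of the local equation therefore has a single edge, joining $w^{s-p}$ to $x^s$. Hence on every branch $v_Q(w)=\tfrac{s}{s-p}v_Q(x)$.

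A monomial $x^iw^{M-i-j}$ then has valuation
\[
v_Q\bigl(x^iw^{M-i-j}\bigr)=\frac{v_Q(x)}{s-p}\bigl(Ms-ip-js\bigr).
\]
This is minimized over the allowed range $0\le i\le \eta-\rho_1$, $0\le j\le p-\rho_2$ at $i=\eta-\rho_1$, $j=p-\rho_2$. There it equals $v_Q(x)(\eta-\rho_1)$, because $Ms-(\eta-\rho_1)p-(p-\rho_2)s=(s-p)(\eta-\rho_1)$.

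By the ultrametric inequality, $v_Q(G)\ge v_Q(x)(\eta-\rho_1)$ for every nonzero $G$; cancellation can only raise the valuation, so I do not need the monomial values to be distinct. Summing over branches and using $\sum_Q v_Q(x)=i_{P_\gamma}(\{x=0\},\ZZ)=s-p$ from \cref{lemma:singularity} gives $i_{P_\gamma}(\mathcal{G},\ZZ)\ge(s-p)(\eta-\rho_1)$.

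The main obstacle is the Newton-polygon step. The singularity at $P_\gamma$ may have several branches, for instance when $p\mid s$, so a single Puiseux parametrization is not available. What I need to check carefully is that the single-edge polygon forces the same slope on every branch. In particular the term $w^{s-1}$ and the mixed terms of $F_\gamma$ must lie strictly above the edge, which holds because $s-1>s-p$ and $a+b=s$ places them on or above the segment.
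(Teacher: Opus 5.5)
Your proposal is correct and shares the paper's skeleton. From \cref{lemma:singularity}, $i_{P_\gamma}(\{x=0\},\ZZ)=s-p$ and $i_{P_\gamma}(\{w=0\},\ZZ)=s$, so the monomial $x^iy^jw^{M-i-j}$ meets $\ZZ$ at $P_\gamma$ with multiplicity $i(s-p)+(M-i-j)s$. Minimizing over the allowed $(i,j)$ gives $(s-p)(\eta-\rho_1)$, attained by $x^{\eta-\rho_1}y^{p-\rho_2}$.

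Where you differ is in passing from monomials to an arbitrary $G$. The paper simply asserts that $G$ vanishes at $P_\gamma$ with multiplicity at least the minimum over its monomials. This is not automatic. $i_{P}(\cdot,C)=\sum_Q v_Q(\cdot)$ is a sum of valuations over the branches of $C$ at $P$, and at a point with several branches it can fail the ultrametric inequality. For example, on $y^2=x^2$ the forms $y-x+x^3$ and $y+x+x^3$ each have multiplicity $4$, but their difference $-2x$ has multiplicity $2$. Your Newton-polygon step shows $v_Q(w)=\frac{s}{s-p}v_Q(x)$ on every branch. That makes each branch's contribution proportional to the global multiplicity, so the ultrametric inequality can be applied branch by branch and then summed. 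This supplies a justification the paper omits.

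For the geometrically irreducible fibers the paper actually uses, $P_\gamma$ turns out to be unibranch. Subtract $z^p-z$ for a suitable polynomial $z$ from $f(x,\gamma)$. One reaches a polynomial of positive degree prime to $p$, because a constant remainder would make $y^p-y-f(x,\gamma)$ split into $p$ factors over $\overline{\F_q}$. So the pole of $x$ is totally ramified, and exactly one place is centred at $P_\gamma$. Then $i_{P_\gamma}(\cdot,\ZZ)$ is a single valuation, and the paper's one-line argument is valid. Your concern about several branches when $p\mid s$ therefore does not arise in that setting. Your argument has the advantage of not needing this fact, and it also covers reducible fibers and, more generally, any singularity whose Newton polygon has a single edge.
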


\begin{proof}
    Let
    \begin{equation*}
        G(x,y,w)=\sum_{i=0}^{\eta-\rho_1}\sum_{j=0}^{p-\rho_2}a_{i,j}x^iy^jw^{M-i-j},
    \end{equation*}
    where $a_{i,j}\in \F_q$, and at least one $a_{i,j}\neq 0$.

    We start by observing that, by \cref{lemma:singularity}, the degree-1 form $x$ (which defines the line $x=0$ in $\P^2_\gamma$) vanishes with multiplicity $s-p$ at the point $P_\gamma$ of $\ZZ$, and the degree-1 form $w$ vanishes with multiplicity $s$ at the point $P_\gamma$ of $\ZZ$.  Thus if $a_{i,j}\neq 0$, the monomial $a_{i,j}x^iy^jw^{M-i-j}$ vanishes with multiplicity $i(s-p)+(M-i-j)s$, and the degree-$M$ form $G$ vanishes with multiplicity at least
\[\min\{i(s-p)+(M-i-j)s:0\leq i\leq\eta-\rho_1, 0\leq j\leq p-\rho_2\}\] at $P_{\gamma}$. Since $s-p<s$, this minimum will be $(s-p)(\eta-\rho_1)$.  
\end{proof}

\begin{corollary}
\label{cor:vanishing}
Every non-zero function in $V_{H}$ vanishes in at most $Ms-(s-p)(\eta-\rho_1)$ points of $\ZZ$ aside from $P_{\gamma}$.    
\end{corollary}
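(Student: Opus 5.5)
The plan is to apply B\'ezout's theorem on $\P^2_\gamma$ to the curve $\mathcal{G}=\{G=0\}$ and the curve $\ZZ$, and then discount the intersection multiplicity at $P_\gamma$ using \cref{lemma:vanishing:at:infty}. Fix $G\in V_H\setminus\{0\}$, which is a form of degree $M$. The curve $\ZZ$ has degree $s$, since the highest-degree terms of \cref{eq: projective AS} come from $F_\gamma$.

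The first step is to check that $G$ and the defining polynomial $h$ of $\ZZ$ share no common factor, so that B\'ezout applies. Since $\gamma$ is taken with $\Zgamma$ a geometrically irreducible Artin--Schreier curve, $h$ is irreducible. A common factor would therefore force $h\mid G$. This is impossible by degrees in $x$: we have $\deg_x G\le \eta-\rho_1$. Condition (2) of \cref{def:main:artinschreier} should make this smaller than $s$, and in any case $\deg_y G\le p-\rho_2<p=\deg_y h$ already rules it out whenever $G$ is not divisible by a power of $w$ alone. Alternatively, I would argue directly that, in the affine chart $w=1$, $g_\gamma$ has $y$-degree less than $p$. Such a function cannot vanish identically on $\Zgamma$, because $1,y,\dots,y^{p-1}$ are linearly independent over $\F_q(x)$ in the function field. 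The one remaining case, $G=w^M$, meets $\ZZ$ only at $P_\gamma$, so its count of other points is $0$ and the claim holds trivially.

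Next, B\'ezout gives
\[
\sum_{Q\in \mathcal{G}\cap\ZZ} i_Q(\mathcal{G},\ZZ)=Ms.
\]
Every intersection point $Q$ contributes at least $1$. The point $P_\gamma$ contributes at least $(s-p)(\eta-\rho_1)$ by \cref{lemma:vanishing:at:infty}; that lemma gives the lower bound for each individual $G$, since it is a minimum over $V_H\setminus\{0\}$. Subtracting, the number of points of $\mathcal{G}\cap\ZZ$ other than $P_\gamma$ is at most $Ms-(s-p)(\eta-\rho_1)$.

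The main obstacle is the no-common-component step. In particular, I need that $g_\gamma$ does not vanish identically on $\Zgamma$ for every $\gamma\in\Gamma$. The argument I would try first is the function-field independence of powers of $y$ below $p$, which relies on $\Zgamma$ being irreducible, i.e. on $f(x,\gamma)$ not being of the form $u^p-u$. Everything else is a direct combination of B\'ezout and the previous lemma.
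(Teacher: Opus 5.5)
Your proof is correct and is the paper's argument: the paper simply invokes B\'ezout on $\P^2_\gamma$ and subtracts the lower bound $(s-p)(\eta-\rho_1)$ on $i_{P_\gamma}(\mathcal{G},\ZZ)$ from \cref{lemma:vanishing:at:infty}, and your explicit check that $\mathcal{G}$ and $\ZZ$ share no common component is a welcome addition. That check relies, as the paper implicitly does, on $\ZZ$ being geometrically irreducible; note also that $\deg_y G\le p-\rho_2<p=\deg_y h$ already rules out $h\mid G$ for every nonzero $G$, so the case split on powers of $w$ is unnecessary, and the aside that condition (2) forces $\eta-\rho_1<s$ is false in general and should be dropped.
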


\begin{proof}
    This is a direct application of Bez\'out's theorem.
\end{proof}

\begin{remark} \label{rem:mindistbounds:comparison}
    Let $c\in C^f$ be any nonzero codeword, and consider the punctured code obtained from $C^f$ by retaining the codeword positions corresponding to the points in the set $\{P\in T \mid \pi_z(P)=\gamma\}$, which are precisely the $\F_q$-points on $\mathcal{Z}_{\gamma}$. Observe that from \cref{cor:vanishing} it follows, in particular, that the minimum distance of such punctured code is at least $\eta p-Ms+(s-p)(\eta-\rho_1)$. However, in the proof of \cref{thm:AS:parameters}, we show that another lower bound is $\rho_1 \rho_2$. Depending on $\rho_1$ and $s$, we can determine which of these two lower bounds is larger. Indeed, note that $\eta p-Ms+(s-p)(\eta-\rho_1) = \rho_1 p - s (p-\rho_2)$, and $\rho_1 p - s (p-\rho_2)\geq \rho_1\rho_2$ if and only if $(p-\rho_2)(\rho_1-s)\geq 0$, hence if and only if $\rho_1 \geq s$ since $p-\rho_2\geq 0$ always holds by the assumptions in \cref{def:main:artinschreier}. 
\end{remark}

\begin{corollary}
\label{cor:mindist}
    The minimum distance of $C^{f}$ is at least $\rho_3\left(\eta p-Ms+(s-p)(\eta-\rho_1)\right)= \rho_3(\rho_1 p - s(p-\rho_2))$.
\end{corollary}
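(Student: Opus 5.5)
Take a nonzero codeword $c=ev_T(g)$ with $g\in V$ and write
\[
g(x,y,z)=\sum_{k=0}^{|\Gamma|-\rho_3} g_k(x,y)\,z^k .
\]
Since $g\neq 0$ as a polynomial, some $g_k$ is nonzero. Each nonzero $g_k$ has $x$-degree at most $\eta-\rho_1<\eta$ and $y$-degree at most $p-\rho_2<p$.

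First I would show that the set $B:=\{\gamma\in\Gamma : g(x,y,\gamma)\equiv 0 \text{ on } \Zgamma(\F_q)\}$ has size at most $|\Gamma|-\rho_3$. Suppose $g_\gamma:=g(x,y,\gamma)$ vanishes on $\Zgamma(\F_q)$. This set has at least $\eta$ distinct $x$-values, and each $x$-value carries $p$ distinct $y$-values. Fix such an $x$-value $\alpha$. Then $g_\gamma(\alpha,y)$ has degree at most $p-\rho_2<p$ in $y$ and vanishes at $p$ distinct values, so all its $y$-coefficients vanish at $\alpha$. Each such coefficient is a polynomial in $x$ of degree at most $\eta-\rho_1<\eta$ vanishing at $\eta$ points, so it is zero. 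Hence $g_\gamma$ is the zero polynomial in $\F_q[x,y]$.

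Now look at a fixed monomial $x^iy^j$. Its coefficient in $g$ is a polynomial in $z$ of degree at most $|\Gamma|-\rho_3$, and it vanishes at every $\gamma\in B$. If $|B|>|\Gamma|-\rho_3$, every such coefficient would be zero and so $g=0$, a contradiction. Therefore at least $\rho_3$ values $\gamma\in\Gamma$ have $g_\gamma$ not identically zero on $\Zgamma(\F_q)$. By the argument above, these are exactly the $\gamma$ for which $g_\gamma\neq 0$ as a polynomial.

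For each such $\gamma$, the fibers $\Zgamma(\F_q)$ for different $\gamma$ are disjoint subsets of $T$, so the weight contributions add. I would homogenize $g_\gamma$ to $G_\gamma\in V_H\setminus\{0\}$, exactly as in the discussion preceding \cref{lemma:vanishing:at:infty}. By \cref{cor:vanishing}, $G_\gamma$ vanishes at most $Ms-(s-p)(\eta-\rho_1)$ points of $\ZZ$ other than $P_\gamma$, and in particular at most that many affine points of $\Zgamma(\F_q)$.

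The main obstacle is the hypothesis of B\'ezout: $G_\gamma$ and the defining form of $\ZZ$ must share no common factor. Since $\ZZ$ is used as a geometrically irreducible fiber, the only way a common factor can occur is if $\ZZ$ divides $G_\gamma$. That is impossible, because $\deg_y G_\gamma<p$, so $G_\gamma$ cannot be a multiple of $y^pw^{s-p}-yw^{s-1}-F_\gamma$. This is consistent with the fact that $g_\gamma$ does not vanish on the whole fiber.

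Since $|\Zgamma(\F_q)|\ge p\eta$ by \cref{rem:AS:length}, $c$ has at least $p\eta-Ms+(s-p)(\eta-\rho_1)$ nonzero entries on each such fiber. Summing over at least $\rho_3$ fibers gives
\[
w(c)\ge \rho_3\bigl(\eta p-Ms+(s-p)(\eta-\rho_1)\bigr).
\]
The equality with $\rho_3(\rho_1p-s(p-\rho_2))$ follows by substituting $M=\eta+p-\rho_1-\rho_2$, as already noted in \cref{rem:mindistbounds:comparison}. Condition (2) of \cref{def:main:artinschreier} ensures this quantity is positive.
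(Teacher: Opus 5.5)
Your proof is correct and follows the paper's argument. You show that at least $\rho_3$ values $\gamma\in\Gamma$ have $g(x,y,\gamma)\neq 0$, apply \cref{cor:vanishing} on each such disjoint fiber using $|\Zgamma(\F_q)|\geq p\eta$, and sum; your extra step that vanishing on the fiber forces $g(x,y,\gamma)=0$ is harmless but not needed.

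Your explicit check of B\'ezout's no-common-factor hypothesis via $\deg_y G_\gamma<p$ is something the paper skips, but it genuinely needs the geometric irreducibility of $\ZZ$, which the paper only asserts in passing. Irreducibility is automatic when $p\nmid s$, but if $f(x,\gamma)=u(x)^p-u(x)$ the fiber splits and the bound can fail. For instance, take $p=3$, $q=27$, $f=x^6-x^2z^2$, $\Gamma=\{\pm1\}$, $\eta=27$, $(\rho_1,\rho_2,\rho_3)=(25,2,1)$ and $g=(y-x^2)(z+1)$: the codeword has weight $54<69$.
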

\begin{proof}
Let $g(x,y,z)\in V\setminus\{0\}$.  As there are $|\Gamma|$ distinct $z$-values for points of the evaluation set $T$, and the maximum degree of $g$ in $z$ is $|\Gamma|-\rho_3$, there must be at least $\rho_3$ values of $\gamma\in\Gamma$ so that $g(x,y,\gamma)$ is not identically 0.  For each of these values of $\gamma$, $g$ has the same vanishing on $\Zgamma$ as the corresponding $G_\gamma\in V_{H}$ on affine points of $\ZZ$.  Since \cref{cor:vanishing} implies that $G_{\gamma}$ vanishes on at most $Ms-(s-p)(\eta-\rho_1)$ affine points of $\ZZ$, in particular $G_{\gamma}$ vanishes on at most this number of points of $\Zgamma$.  Therefore there must be $\eta p-Ms+(s-p)(\eta-\rho_1)$ evaluation points of $\Zgamma$ where $g$ does not vanish. By condition (2) in \cref{def:main:artinschreier}, this quantity will be at least 1.
\end{proof}

\begin{remark}
    Observe that in the case where \(j = p-\rho_2\) and \(\eta = \rho_1\), the function \(y^{p-\rho_2}\) does not vanish at \(P_\gamma\), and the intersection multiplicity given in \cref{lemma:vanishing:at:infty} is zero. In this situation, \cref{cor:vanishing} simply recovers the conclusion of Bézout’s Theorem and the result still holds. The bound on the minimum distance in this case is $\rho_3(\eta p - Ms)$. By \cref{def:main:artinschreier}, $\eta p - Ms \geq 1$ and $\rho_3\geq 1$, hence the minimum distance is positive also in this case.

\end{remark}

\cref{cor:mindist} yields the lower bound $d \geq \rho_3\left(\eta p-Ms+(s-p)(\eta-\rho_1)\right)$ for a code $C^f$ as in \cref{def:main:artinschreier}. We now show that any such code is an HLRC with lower codes supported on the fibers of $\pi_{x,z} \colon \calS_f \to \mathbb{A}^2_{x,z}$ and middle codes supported on the curves $\Zgamma$, see \cref{HLRCdef} and \cref{rem:middle:lower:codes}.

\begin{theorem}
\label{thm:AS:parameters}
    Let $f\in\F_{q}[x,z]$ with $\deg_x(f(x,z))=s$, where $s\in \mathbb{Z}_{\geq p+1}$,
    and let $\calS_f$ and $\Zgamma$ be as in \cref{eq:AS:surface} and \cref{eq:AS:Zgamma}. Let $C^f$ be an evaluation code from $\mathcal{S}_f$ as in \cref{def:main:artinschreier}, obtained with any suitable choice of the parameters $\eta, \rho_1, \rho_2, \rho_3$. 

    The code $C^f$ has hierarchical locality with lower codes supported on the fibers of $\pi_{x,z} \colon \calS_f \to \mathbb{A}^2_{(x,z)}$ and middle codes supported on the curves $\Zgamma$. More precisely, $C^f$ is an $[n,k,d]$-code with
    \begin{align*}
        n&=|T|\geq \eta p|\Gamma|, \quad
        k=(\eta-\rho_1+1)(p-\rho_2+1)(|\Gamma|-\rho_3+1),  \\
        d&\geq \rho_3\max\{\rho_1\rho_2, \ \rho_1 p - s(p-\rho_2)\}.
    \end{align*}
    Moreover, with notations as in \cref{HLRCdef}, $C^f$ has local parameters 
    \begin{equation*}
        ((n_1,k_1,d_1),(n_2,k_2,d_2)),
    \end{equation*}
    where
    \begin{align*}
        n_1&\geq \eta p, &
        k_1&=(\eta-\rho_1+1)(p-\rho_2+1), &
        d_1& = \max\{\rho_1\rho_2, \ \rho_1 p - s(p-\rho_2)\} \\
        n_2&=p, &
        k_2&=p-\rho_2 +1, &
        d_2&= \rho_2.
    \end{align*}
\end{theorem}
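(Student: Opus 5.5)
The plan is to treat the three layers of the code (fibers of $\pi_{x,z}$, curves $\Zgamma$, and the whole set $T$) one at a time. At each layer we use the monomial structure of $V$ and the fact that the relevant points form a grid-like set.

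\textbf{Length and dimension.} The bound $n=|T|\geq \eta p|\Gamma|$ is \cref{rem:AS:length}. For $k$ we show that $ev_T$ is injective on $V$. Suppose $g\in V$ vanishes on $T$. Since $\deg_z g\le |\Gamma|-\rho_3<|\Gamma|$, it suffices to show that $g(x,y,\gamma)$ vanishes identically for every $\gamma\in\Gamma$. This follows once the middle distance is positive, which is shown in the next step. So $k=\dim V=(\eta-\rho_1+1)(p-\rho_2+1)(|\Gamma|-\rho_3+1)$, and the same argument gives $k_1=(\eta-\rho_1+1)(p-\rho_2+1)$.

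\textbf{Lower codes.} Fix $(\alpha,\beta,\gamma)\in T$. The fiber of $\pi_{x,z}$ over $(\alpha,\gamma)$ consists of the $p$ points $(\alpha,\beta+\nu,\gamma)$ with $\nu\in\F_p$; all of them are $\F_q$-rational and lie in $T$. So $n_2=p$. Restricted to this fiber, any $g\in V$ becomes a polynomial in $y$ of degree at most $p-\rho_2$, evaluated at $p$ distinct points. This is a Reed--Solomon code, so $k_2=p-\rho_2+1$ and $d_2=\rho_2$.

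\textbf{Middle codes.} The middle code for a position with $z=\gamma$ is supported on $\Zgamma(\F_q)$, so $n_1\ge\eta p$. It has $(k_2,d_2)$-locality via the fibers above, since each fiber lies inside $\Zgamma$. For $d_1$ there are two bounds.
\begin{itemize}
\item The bound $\rho_1p-s(p-\rho_2)$ is exactly \cref{cor:vanishing}, as computed in \cref{rem:mindistbounds:comparison}.
\item For $\rho_1\rho_2$, take $g_\gamma(x,y)=\sum_{i\le \eta-\rho_1} c_i(y)x^i\neq 0$. Choose $\eta$ values $\alpha\in\pi_x(\Zgamma(\F_q))$. Above each such $\alpha$ lie $p$ points, on which $g_\gamma(\alpha,y)$ is a polynomial in $y$ of degree at most $p-\rho_2$. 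Consider the coefficients as functions of $\alpha$. Some coefficient of $y^j$ is a nonzero polynomial in $x$ of degree at most $\eta-\rho_1$, so it is nonzero for at least $\rho_1$ of the chosen $\alpha$'s. At each such $\alpha$, $g_\gamma(\alpha,\cdot)\neq 0$ is nonzero on at least $\rho_2$ of the $p$ fiber points. This gives at least $\rho_1\rho_2$ nonzero positions.
\end{itemize}
Hence $d_1\ge\max\{\rho_1\rho_2,\rho_1p-s(p-\rho_2)\}$. Note that $\rho_1\rho_2\geq 4>0$ without using condition (2), which justifies the injectivity claim above.

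\textbf{Global distance.} Repeat the product argument one level up. Write $g=\sum_k g_k(x,y)z^k$ with $k\le|\Gamma|-\rho_3$. Since $g\neq0$, some $g_k\neq 0$. Then the map $\gamma\mapsto g(x,y,\gamma)$ is nonzero for at least $\rho_3$ values $\gamma\in\Gamma$. On each such $\Zgamma$ we get at least $d_1$ nonzero positions, so $d\ge\rho_3 d_1$, extending \cref{cor:mindist}.

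\textbf{Main obstacle.} The hardest point is the distance claim $d_1$, stated in the theorem with equality rather than as a lower bound. Establishing the lower bound is routine. Exact equality would need a codeword attaining it; I would try a product of linear factors in $x$ and in $y$ vanishing on chosen fibers. I would present $d_1$ as the lower bound, mirroring the $\ge$ for $d$, and note the sharpness only where the product construction works.
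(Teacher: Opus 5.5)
Your proposal is correct and takes essentially the same route as the paper. You interpolate on the $\pi_{x,z}$-fibers, obtain the $\rho_1\rho_2$ bound on each $\Zgamma$ (your nonzero-position count is the weight-side phrasing of the paper's layered erasure-recovery argument), combine it with \cref{cor:vanishing}, and use the $\rho_3$ nonvanishing fibers for $d$. You deduce injectivity from $\rho_1\rho_2>0$ rather than from the B\'ezout bound plus condition (2), which is slightly more economical.

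Your stated ``main obstacle'' is not one. In \cref{HLRCdef} the parameter $d_1$ only has to satisfy $d(C_{1,i})\geq d_1$, so the lower bound you prove is exactly what the statement asks for, and the paper proves no sharpness either.
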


\begin{proof}
                                                                                                                                                                                                                                                                                                                                                                                                                                                                                                                                                                                                                                                                                                                                                                                                                                                                                                                                                                                                                                                                                                                                                                                                                                                                                                                                                                                                                                                                                                                                                                                                                                                                                                                                                                                                                                                                                                                                                                                                                                                                                                                                                                                                                                                                                                                                                                                                                                                                                                                                                                                                                                                                                                                                                                                                                                                                                                                                                                                                                                                                                                                                                                                                                                                                                                                                                                                                                                                                                                                                                                                                                                                                                                                                                                                                                                                                                                                                                                                                                                                                                                                                                                                                                                                                                                                                                                                                                                                                                                                                                                                                                                                                                                                                                                                                                                                                                                                                                                                                                                                                                                                                                                                                                                                                                                                                                                                                                                                                                                                                                                                                                                                                                                                                                                                                                                                                                                                                                                                                                                                                                                                                                                                                                                                                                                                                                                                                                                                                                                                                                                                                                                                                                                                                                                                                                                                                                                                                                                                                                                                                                                                                                                                                                                                                                                                                                                                                                                                                                                                                                                                                                                                                                                                                                                                                                                                                                                                                                                                                                                                                                                                                                                                                                                                                                                                                                                                                                                                                                                                                                                                                                                                                                                                                                                                                                                                                                                                                                                                                                                                                                                                                                                                                                                                                                                                                                                                                                                                                                                                                                                                                                                                                                                    
    Consider the position with index $i$ of a codeword $c\in C^f$. The coordinate of $c$ in position $i$ is given by the evaluation of a polynomial in $V$ at some fixed point $P_i\coloneqq (\alpha, \beta, \gamma)\in T$. Denote by $C^f_{2,i}$ the punctured code obtained from $C^f$ by retaining the codeword positions corresponding to the points in the evaluation set $T$ having the same $x$- and $z$-coordinates as $P_i$, i.e. with $x=\alpha$ and $z=\gamma$. These points are exactly those in the fiber
        \begin{equation*}
            F_{2,i} \coloneqq  (\pi_{x,z}\mid_{\calS_f})^{-1}(\pi_{x,z}(P_i))=\{(\pi_x(P_i),\pi_y(P_i)+\delta,\pi_z(P_i)):\delta\in\F_p\},
        \end{equation*}
        which in particularly shows $n_2=p$.
    Let now $g\in V$ and observe that $g|_{F_{2,i}}=g(\alpha, y, \gamma)$ is a univariate polynomial in $y$ of degree at most $p-\rho_2$. 
    This means that, knowing the evaluation of $g$ at $p-\rho_2+1$ points in $F_{2,i}$, we can interpolate $g(\alpha,y,\gamma)$ across all of $F_{2,i}$ using Lagrange interpolation. 
    Therefore, we have local recovery with $(p-(\rho_2-1),\rho_2)$-locality and any $\rho_2-1$ erasures of coordinates corresponding to points in $F_{2,i}$ can be recovered at the lower level.  This implies that the lower code $C^f_{2,i}$ is a $(k_2, d_2)$-locally recoverable code, with dimension at most $k_2=p-\rho_2 +1$ and minimum distance at least $d_2=\rho_2$.

    Denote now by $C^f_{1,i}$ the punctured code obtained from $C^f$ by retaining the codeword positions corresponding to the points in $F_{1,i}$, where we define
    \begin{equation*}
        F_{1,i}\coloneqq \{P\in T \mid \pi_z(P)=\gamma\}.
    \end{equation*}
    Note that these points are precisely the $\F_q$-points on $\mathcal{Z}_{\gamma},$ from which it follows that $n_1 \geq \eta p$. 

    The fact that the dimension of $C^f_{1,i}$ is at most $k_1=(\eta-\rho_1+1)(p-\rho_2+1)$ follows directly by definition of the space of polynomials $V$ in \cref{def:main:artinschreier}. On the other hand, to show that $C^f_{1,i}$ has minimum distance at least $d_1=\rho_1\rho_2$, we start by computing a lower bound on the number of coordinates that uniquely determine a vector in the code. Let
    \begin{equation}
    \label{eq:gpolynomial}
        g_\gamma(x,y)\coloneqq  g(x,y, \gamma) = g_0(x)+g_1(x)y+\dots g_{p-\rho_2}(x)y^{p-\rho_2},
    \end{equation}
    where $g_j\in\F_q[x]$ with $\deg(g_j)\leq \eta-\rho_1$ for each $0\leq j\leq p-\rho_2$. 
    Considering now the process of polynomial interpolation in the setting of this multivariate polynomial $g$, we obtain the following.
    
    For $a\in \F_q$ we define the following subset of $T$:
    \begin{equation*}
        W_a\coloneqq \{(a,y)\in \Zgamma(\F_q) : y^p - y = f(a, \gamma)\}.
    \end{equation*}
    Note that, by definition of the evaluation set $T$ and of the set $\Gamma$, there exist at least $\eta$ distinct nonempty sets $W_a$, since for any point in $T$, and hence in particular for any point in $F_{1,i}$, it holds that the $z$-coordinate is an element of the set $\Gamma$. Moreover, if $W_a\neq \emptyset$ then $|W_a|=p$ by Artin-Schreier theory.

     As in the proof of \cite[Theorem 3.1]{berg2024codes}, the main idea is now to find each $g_j(x)$ in \cref{eq:gpolynomial} by Lagrange univariate polynomial interpolation, which requires finding $g_j(a)$ for at least $\eta-(\rho_1-1)$ values of $a$ with $W_a\neq \emptyset$. 
    To this aim, observe that
    \begin{equation*}
        g_\gamma(a,y)=g_0(a)+g_1(a)y+\dots g_{p-\rho_2}(a)y^{p-\rho_2}
    \end{equation*}
    is a univariate polynomial, and that its coefficients $g_j(a)$ can hence be interpolated given the values of $g_\gamma(a,y)$ on at least $p-(\rho_2-1)$ distinct points in $W_{a}$.

    For this layered process of interpolation on $g_\gamma(x,y)$ to succeed, we need $\eta-(\rho_1-1)$ sets $W_a\neq \emptyset$ for which the evaluations of $g_\gamma(a,y)$ are known on at least $p - (\rho_2-1)$ points in $W_a$.

    Note that, for a fixed $W_a\neq \emptyset$, determining $g_j(a)$ for all $j$ fails only if there are at least $\rho_2$ erasures at indices corresponding to points in $W_a$. Moreover, the next step of determining the coefficients of $g_j(x)$ for each $j$ fails only if there are at least $\rho_1$ values of $a\in \F_q$ with $W_a\neq \emptyset$ for which we failed to retrieve $g_j(a)$. 
    Therefore, there must be at least $\rho_1\rho_2$ missing evaluations of $g_\gamma(x,y)$ in $F_{1,i}$ for the recovery to fail, which in turn guarantees that the minimum distance of the middle code $C^f_{1,i}$ is at least $\rho_1\rho_2$. As observed in \cref{rem:mindistbounds:comparison}, from \cref{cor:vanishing}, we also have the lower bound $\rho_1p - s(p-\rho_2)$ for the minimum distance of $C^f_{1,i}$, from which $d_1 = \max\{\rho_1\rho_2, \ \rho_1 p - s(p-\rho_2)\}$ follows.
    
    To conclude the proof, we consider now the full code $C^f$ and show that the evaluation map is injective for $V$ and $T$, thus proving the dimension of $C$ to be exactly the dimension of $V$ as an $\F_q$-vector space. Moreover, combining the previous part of the proof with \cref{cor:mindist}, we also show that the bound $d\geq \rho_3\max\{\rho_1\rho_2, \ \rho_1 p - s(p-\rho_2)\}$ holds.

    To show the injectivity of the evaluation map for $V$ and $T$, let $g\in V\setminus\{0\}$. As noted in the proof of \cref{cor:mindist}, there must be at least $\rho_3$ values of $\gamma\in\Gamma$ such that $g(x,y,\gamma)$ is not identically zero on $\Zgamma$. Moreover, on each $\Zgamma$ for which $g$ is not identically zero, \cref{cor:vanishing} guarantees that $g$ vanishes in at most $Ms - (s-p)(\eta-\rho_1)$ points of $\ZZ$ aside from $P_\gamma$, and hence in particular it vanishes in at most $\rho_3(Ms - (s-p)(\eta-\rho_1))$ points in $T$. Since $|T|\geq \rho_3 \eta p > \rho_3(Ms - (s-p)(\eta-\rho_1))$ by condition $(2)$ in \cref{def:main:artinschreier}, it hence follows that any nonzero function in $V$ cannot vanish at all the points of $T$ and therefore, in particular, that the evaluation map on the points in $T$ is injective.

    To conclude, observe that the fact that there are at least $\rho_3$ values of $\gamma\in\Gamma$ such that any $g\in V\setminus\{0\}$ is not identically zero on $\Zgamma$, together with the lower bound $\rho_1 \rho_2$ for the minimum distance of $C^f_{1,i}$, implies in particular that $\rho_1 \rho_2 \rho_3$ is a lower bound for $d$. On the other hand, from \cref{cor:mindist} we already know that $\rho_3(\rho_1 p - s(p-\rho_2)$ is also a lower bound for $d$, hence the bound $d\geq \rho_3\max\{\rho_1\rho_2, \ \rho_1 p - s(p-\rho_2)\}$ follows. By \cref{rem:mindistbounds:comparison}, we also already know that $\max\{\rho_1\rho_2, \ \rho_1 p - s(p-\rho_2)\} = \rho_1\rho_2$ when $\rho_1 < s$, while $\max\{\rho_1\rho_2, \ \rho_1 p - s(p-\rho_2)\} = \rho_1 p - s(p-\rho_2)$ when $\rho_1 \geq s$.
\end{proof}

\begin{remark}
    \label{rem:explicitrecovery}
    As it was the case in the proof of \cite[Theorem 3.1]{berg2024codes}, observe that the proof of \cref{thm:AS:parameters} provides explicit recovery procedures both at the level of lower and at the level of middle codes.
\end{remark}

\section{Examples of HLRCs from Artin-Schreier surfaces}
\label{sec:ASexample}

In this section, we revisit examples of HLRCs from an Artin-Schreier surface discussed in \cite[Section 4]{berg2024codes}, showing how, with our approach, we are able to obtain improved bounds on their minimum distance and improved estimates on their asymptotic parameters.

Throughout this section, we let $p$ be an odd prime. With notations as in \cref{sec:AS}, the Artin-Schreier surface of the example treated in \cite[Section 4]{berg2024codes} is 
\begin{equation}
\label{eq:ASex:surface}
    \calS_f : y^p -y = x^{p+1}z^2+x^2z^{p+1}
\end{equation}
over $\F_{p^2}$, with fibers
\begin{equation}\label{eq:ASex:fibers}
    \Zgamma: y^p-y=x^{p+1}\gamma^2+x^2\gamma^{p+1},
\end{equation}
where $\gamma\in\F_{p^2}^*$.  

We consider any HLRC $C^f$ from $S_f$, according to \cref{def:main:artinschreier}, with $\eta=2p-1$.
Observe that this gives codes that still satisfy the hypotheses of \cite[Theorem 3.1]{berg2024codes}, while being different than the examples described in \cite[Theorem 4.4]{berg2024codes}, where $\eta$ was set to be equal to $p$. In particular, choosing $\eta=2p-1$ produces codes $C^f$, for any suitable choice of $\rho_1, \rho_2, \rho_3$, that are shorter than the codes considered in \cite[Theorem 4.4]{berg2024codes}. Indeed, by \cite[Lemma 4.3]{berg2024codes}, this means that the cardinality of the evaluation set $T$ is $|T|=\eta p |\Gamma|$, which is strictly smaller than the cardinality of the evaluation set of the codes defined in \cite[Theorem 4.4]{berg2024codes}. This is because the evaluation set of such codes contained also all the $\F_{p^2}$-rational points on the fibers $\Zgamma$ with $\gamma^{p-1} = \gamma^{1-p}$, which in our construction with $\eta=2p-1$ are excluded. 

With this choice of evaluation points in place, we now choose parameters $\rho_1, \rho_2, \rho_3$ such that $p+1 \leq \rho_1 \leq \eta  , \ 2 \leq \rho_2 \leq p, \ 1\leq \rho_3 \leq |\Gamma|.$
In this way, given any choice of $\rho_1, \rho_2, \rho_3$ as above, the space of evaluation polynomials of the corresponding $C^f$ is 
   \begin{equation} \label{eqn: exV}
        V := V_{\rho_1,\rho_2, \rho_3} :=\langle x^iy^jz^k: 0\leq i\leq 2p-1-\rho_1, 0\leq j\leq p-\rho_2, 0\leq k \leq |\Gamma|-\rho_3\rangle.
    \end{equation}
Observe that, in particular, the indices $\eta, \rho_1, \rho_2, \rho_3$ as above satisfy conditions $(1)$ and $(2)$ in \cref{def:main:artinschreier}.

Despite choosing a smaller evaluation set of points, this construction of codes $C^f$ following our \cref{def:main:artinschreier} still satisfies also the hypotheses of \cite[Theorem 3.1]{berg2024codes} (see the discussion in the proof of \cite[Theorem 4.4]{berg2024codes}, in particular after \cite[Eq. (4.12)]{berg2024codes}). Therefore, it is immediate to see that the hierarchical structure of $C^f$ and the local parameters of lower and middle codes are essentially the same as those described in \cite[Theorem 4.4]{berg2024codes}. On the other hand, with our choices, the length of the full code is shorter, but we manage to give a bound on the minimum distance of the middle and full code that improves the ones presented in \cite[Theorem 4.4]{berg2024codes}. 

Indeed, from \cref{rem:mindistbounds:comparison} it follows that for $\rho_1 = p+1$ we retrieve the same bounds as in \cite[Theorem 4.4]{berg2024codes}, while for $\rho_1 > p+1$ we obtain a larger lower bound, namely $\rho_1 p - (p+1)(p-\rho_2)$ for the minimum distance of the middle codes, and  $\rho_3(\rho_1 p - (p+1)(p-\rho_2))$ for the minimum distance of $C^f$.

\begin{remark}
\label{rem:ASex:comparison:BMW}
    By the hypotheses on $\eta, \rho_1, \rho_2$, it follows in particular that $\eta p - M(p+1) + (\eta-\rho_1) \geq 2p + 2,$
    hence, from \cref{cor:mindist}, we have that the minimum distance $d$ of $C^f$ satisfies $d\geq \rho_3(2p+2)$.
    On the other hand, the general lower bound for $d$ obtained from \cite[Theorem 4.4]{berg2024codes} is $d \geq p + 4$, and $\rho_3(2p+2) \geq p + 4$ for any $p\geq 2$ and any choice of $\rho_3$ as in \cref{def:main:artinschreier}.
\end{remark}

We conclude the section by considering several examples of codes $C^f$ for explicit choices of indices $\rho_1$, $\rho_2$ and $\rho_3$, discussing how these choices impact the minimum distance, the information rate and the relative distance of the codes.

\begin{example}
\label{ex:ASmaxdim}
    The code $C^f$ with maximum dimension is the one with minimal indices, namely, $\rho_1=p+1$ $\rho_2=2$ and $\rho_3=1$. Clearly, in this case, we have
    an $[n,k,d]$-code with
    \begin{align*}
        n&=(2p^2-p)(p-1)^2, & k_{\max}&=(p-1)^4, & d\geq 2p+2.
    \end{align*}
    Notice that the information rate is on the order of $1/2$.

    When taking $\rho_1=p+1$, $\rho_2=2$ and $\rho_3=p+1$, we have an $[n,k.d]$-code with parameters
    \begin{align*}
        n&=(2p^2-p)(p-1)^2, & k&=(p^2-3p+1)(p-1)^2, & d\geq (p+1)(2p+2),
    \end{align*}
    which also has information rate on the order of $1/2$. Note that this improves the minimum distance from Example \ref{ex:ASmaxdim} by a factor of $p+1$ while preserving the order of the rate.

It is also desirable to have both rate and relative distance bounded away from 0 as $p$ increases. A choice of parameters for which this is achieved is, for instance, $\rho_1=p+1$, $\rho_2=\frac{p-1}{2}$ and $\rho_3=\frac{(p-1)^2}{2}$, which give an $[n,k,d]$-code with
    \begin{align*}
        n&=(2p^2-p)(p-1)^2, & k&=\left(p+1\right)\left(\frac{p+3}{2}\right)\left(\frac{p^2+3}{2}-p\right), & d\geq \frac{(p-1)^2(p^2-1)}{4}.
    \end{align*}
    Observe that the asymptotic parameters are on the order of $1/8$ and $1/2$, respectively, for information rate and relative minimum distance.
\end{example}

\section{Codes on Kummer-type surfaces}
\label{sec:Kummer}

In this section, in the same spirit as in \cref{sec:AS}, we devise a general construction of HLRCs as AG codes from surfaces admitting a fibration where the fibers are Kummer curves.

As before, let $q\coloneqq p^h$ be a positive power of some prime $p$ and let $\lambda \in \mathbb{Z}_{>0}$ with $\gcd(p,\lambda)=1$. Furthermore, assume that $\Fq$ contains a primitive $\lambda$-th root of unity. Consider the surface in $\mathbb{A}^3_{(x,y,z)}$ defined by
\begin{equation}
\label{eq:genKummer}
    \calS_f : y^\lambda-f(x,z) =0,
\end{equation}
where 
\begin{equation}
\label{eq:K:fpoldef}
    f(x,z) := c \cdot x^{h} \cdot z^{\nu} \cdot \prod_{i=1}^{\mu-{h}}(x-a_iz)\in \mathbb{F}_q[x,z],
\end{equation}
with $c\in \mathbb{F}_q^*$, $\mu \in \mathbb{Z}_{>0}$, $h, \nu\in \mathbb{Z}_{\geq 0}$, $0\leq h \leq \mu-1$, $\mu, \nu < \lambda$, and $a_i\in \mathbb{F}_q^*$ with $a_i\neq a_j$ for any $i\neq j$. Let now $\gamma\in \mathbb{F}_q^*$ and define
\begin{equation}
\label{eq:genKummer:Zgamma}
    \Zgamma\coloneqq \begin{cases}
        y^\lambda-f(x,z)=0\\
        z-\gamma=0
    \end{cases},
\end{equation}
that is, the curve obtained by intersecting $\calS_f$ with the plane $z=\gamma$ in $\mathbb{A}^3_{(x,y,z)}$. Let now $\A^2_\gamma$ be as defined in \cref{subsec:agtools}. Since $\Zgamma$ lies on the plane $z-\gamma=0$, with slight abuse of notation, as in \cref{sub:ASgeneral}, we will also denote by $\Zgamma$ the plane curve in $\mathbb{A}^2_\gamma$ defined by $y^\lambda-f(x,\gamma)=0$. 
\begin{remark}
    \label{rem:Kummer:fibers}
    Note that, by our assumptions and by the fact that, in \cref{eq:K:fpoldef}, we choose $a_i\neq a_j$ for any $i\neq j$, it follows that the curve $\Zgamma$ is of Kummer type and such that the polynomial $f(x,\gamma)$ is a product of $\mu-h +1$ distinct factors, of which $\mu - h$ are not multiple of $x$. Indeed,
    \begin{equation*}
        f(x,\gamma) = c \cdot x^h \cdot \gamma^\nu \cdot \prod_{i=1}^{\mu-h}(x-a_i\gamma)
    \end{equation*}
    and $a_i\gamma \neq a_j \gamma$ for any $i\neq j$, since $\gamma\in \mathbb{F}_q^*$ and multiplication by $\gamma$ is an automorphism of the field $\Fq$.
\end{remark}

Still with the notations introduced in \cref{subsec:agtools}, we  denote the projective closure of $\mathbb{A}^2_\gamma$ by $\mathbb{P}^2_\gamma$, with homogeneous coordinates $(x:y:w)$, and let
\begin{equation}
\label{eq:Zgamma:proj:closure}
    \overline{\calZ}_\gamma:y^\lambda - F_\gamma(x,w) = 0,
\end{equation}
where $F_\gamma(x,w)$ is the homogenization of $f(x,\gamma)\in \mathbb{F}_q[x]$ with respect to a variable $w$. Since we are assuming $\mu < \lambda$, the polynomial $F_\gamma(x,w)$ is homogeneous of degree $\lambda$ that is a multiple of $w$. More precisely, 
\begin{equation}
\label{eq:Fgamma}
    F_\gamma(x,w)=c\cdot w^{\lambda-\mu} \cdot x^h \cdot \gamma^\nu \cdot \prod_{i=1}^{\mu-h}(x - a_i\gamma \ w).
\end{equation}
The curve $\overline{\calZ}_\gamma$ defined in \cref{eq:Zgamma:proj:closure} is the projective closure of $\Zgamma$ in $\mathbb{P}^2_\gamma$. Observe that this curve has a unique point at infinity, namely $P_\gamma\coloneqq(1:0:0)$,  which corresponds to the point $[(1:0:0),(\gamma, 1)]$ in the partial compactification of $S_f$ in $\mathbb{P}^2_{(x:y:w)}\times \mathbb{A}^1_z$.
As in \cref{sub:ASgeneral}, we start the discussion with some observations on the multiplicity of the point $P_\gamma$ on $\ZZ$.

\begin{lemma}\label{lemma:genKummer:multiplicities}
The point $P_{\gamma}$ is a point of $\ZZ$ of multiplicity $\lambda-\mu$. Moreover, the line $w=0$ is the only tangent of $\ZZ$ at this point and intersects it with multiplicity $\lambda$, while the line $y=0$ intersects $\ZZ$ at $P_\gamma$ with multiplicity $\lambda-\mu$.
\end{lemma}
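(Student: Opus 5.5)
We argue exactly as in the proof of \cref{lemma:singularity}, working in the affine chart $x=1$ of $\P^2_\gamma$, where $P_\gamma=(1:0:0)$ becomes the origin with affine coordinates $(y,w)$. Dehomogenizing \cref{eq:Zgamma:proj:closure} with respect to $x$ using \cref{eq:Fgamma}, the local equation of $\ZZ$ is
\[
y^\lambda - c\gamma^\nu\, w^{\lambda-\mu}\prod_{i=1}^{\mu-h}(1-a_i\gamma\, w)=0 .
\]
The first step is to read off the lowest-degree homogeneous part of this polynomial. The product $\prod_{i}(1-a_i\gamma w)$ equals $1$ plus terms of positive degree in $w$, so the second summand is $-c\gamma^\nu w^{\lambda-\mu}$ plus terms of degree larger than $\lambda-\mu$. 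Since $\mu\geq 1$, we have $\lambda-\mu<\lambda$, and $c\gamma^\nu\neq 0$ because $c\in\F_q^*$ and $\gamma\in\F_q^*$. Hence the lowest-degree form is $-c\gamma^\nu w^{\lambda-\mu}$. It follows that $P_\gamma$ has multiplicity $\lambda-\mu$ and that its tangent cone is supported on the single line $w=0$. If $\mu=\lambda-1$ the point is smooth; otherwise it is singular with a unique tangent.

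Next we handle the line $w=0$. Substituting $w=0$ into \cref{eq:Zgamma:proj:closure} gives $y^\lambda=0$, because $F_\gamma$ is divisible by $w^{\lambda-\mu}$ with $\lambda-\mu\geq 1$. So $P_\gamma$ is the only point of $\ZZ$ on this line. The polynomial $y^\lambda-F_\gamma$ is not divisible by $w$, since it contains the monomial $y^\lambda$, so B\'ezout's theorem applies. It gives total intersection number $\lambda=\deg\ZZ$, and all of it is concentrated at $P_\gamma$.

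Finally we treat the line $y=0$, which passes through $P_\gamma$ and is different from the tangent $w=0$. We can either invoke the standard fact, already used in \cref{lemma:singularity}, that a line through a point which is not tangent there meets the curve with multiplicity equal to the point's multiplicity, or compute directly. For the direct computation, restrict the local equation to $y=0$. This yields $w^{\lambda-\mu}\cdot u(w)$ with $u(0)=-c\gamma^\nu\neq 0$, so $u$ is a unit in the local ring, and the local intersection number is $\dim \F_q[w]_{(w)}/(w^{\lambda-\mu})=\lambda-\mu$.

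None of these steps is a real obstacle. The only points needing care are the nonvanishing of $c\gamma^\nu$ and the strict inequality $\lambda-\mu<\lambda$. These guarantee both that the lowest-degree term is $w^{\lambda-\mu}$ rather than something involving $y^\lambda$, and that $\lambda-\mu\geq 1$, so that $P_\gamma$ actually lies on $\ZZ$.
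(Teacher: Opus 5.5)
Your proposal is correct and follows the paper's approach. Like the paper, you dehomogenize at $x=1$, read off the lowest-degree form $-c\gamma^\nu w^{\lambda-\mu}$, and apply B\'ezout to the line $w=0$; for the line $y=0$, your direct local computation simply spells out what the paper leaves to the analogy with \cref{lemma:singularity}.
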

\begin{proof}
    Let $h(x,y,w) := y^\lambda - c\cdot w^{\lambda-\mu} \cdot x^h \cdot \gamma^\nu \cdot \prod_{i=1}^{\mu-h}(x - a_i\gamma \ w)$ be the homogeneous polynomial defining $\ZZ$ in $\mathbb{P}^2_\gamma$, see \cref{eq:Zgamma:proj:closure} and \cref{eq:Fgamma}. Since the point $P_\gamma$ is the origin in the affine chart of $\P^2_\gamma$ where $x=1$, to compute its multiplicity in $\ZZ$ we dehomogenize $h(x,y,w)$ with respect to $x$ and obtain $y^\lambda - c\cdot w^{\lambda-\mu} \cdot \gamma^\nu \cdot \prod_{i=1}^{\mu-h}(1 - a_i\gamma \ w)$. From here, the proof is entirely analogous to that of \cref{lemma:singularity}.
\end{proof}

\subsection{Code construction}

In the spirit of \cref{def:main:artinschreier}, let now $\eta\in \mathbb{Z}_{>0}$ be a positive integer with
\begin{equation}
\label{eq:genKummer:eta}
    0 \leq \eta \leq \mathrm{max} \ \{|\pi_x(\calZ_\gamma(\Fq))| \ : \ \gamma\in \Fq\},
\end{equation}
and let $\Gamma \subseteq \Fq$ denote the subset
\begin{equation}
\label{eq:genKummer:Gamma}
    \Gamma := \{\gamma\in\Fq \ : \ |\pi_x(\calZ_\gamma(\Fq))| \geq \eta\}.
\end{equation}
For any $\gamma \in \Gamma$, we define the following subset of $\F_{q}$-rational points of $\calS_f$:
\begin{equation}
\label{eq:genKummer:Tgamma}
    T_\gamma := \{(x,y,\gamma)\in \calS_f(\F_{q})\mid (x,y)\in \calZ_\gamma(\F_{q})\}\setminus \{(\bar{x},0,\gamma) \mid \bar{x}\in \F_{q}, \ f(\bar{x}, \gamma)= 0\}.
\end{equation}
In our notations, this subset can be identified with the subset of points of $\ZZ$ in $\P^2_\gamma$ given by 
\begin{equation}
\label{eq:genKummer:Tgamma:Pgamma}
    \ZZ(\F_{q^2})\setminus \left( \{ (1:0:0)\}\cup \{(\bar{x}:0:1) \mid \bar{x}\in \F_{q}, \ f(\bar{x},\gamma)= 0\}\right).
\end{equation}

\begin{remark}
\label{rem:genKummer:Tgamma:cardinality}
    Note that, by our choice of the set $\Gamma$, we have that $|T_\gamma| \geq \eta \lambda$ for any $\gamma\in \Gamma$. Indeed, by our definition of $f(x,z)$ (see \cref{eq:K:fpoldef}) and by Kummer theory, given any $\bar{x}$ which is the $x$-coordinate of a point $P:=(\bar{x}: \bar{y} : \bar{w})\in T_\gamma$, there are exactly $\lambda$ distinct solutions in $\F_{q^2}$ to the equation $y^\lambda - f(\bar{x},\gamma) = 0$, which means that there are exactly $\lambda$ many distinct points in $T_\gamma$ with $x$-coordinate equal to $\bar{x}$.  
\end{remark}

\begin{definition} 
\label{def:genKummer} 
    Let $\calS_f$ be the surface defined in \cref{eq:genKummer}. Let also $\Zgamma$ be as in \cref{eq:genKummer:Zgamma} and $\T_\gamma$ as in \cref{eq:genKummer:Tgamma}, with $\eta$ and $\Gamma$ as defined above in \cref{eq:genKummer:eta} and \cref{eq:genKummer:Gamma}, respectively. Define the set
    \begin{equation}
    \label{eq:genKummer:T}
    T:= \bigcup_{\gamma\in \Gamma}T_\gamma.
    \end{equation}    
    Furthermore, let $\rho_1\in[\eta]$, $\rho_2\in [\lambda]$ and $\rho_3\in[|\Gamma|]$ be positive integers such that $\rho_1\geq \lambda$ and $\rho_2\geq 2$.
    Define the $\F_{q}$-vector space
    \begin{equation}
        \label{eq:genKummer:V}
        V:=V_{\rho_1,\rho_2,\rho_3}:=\langle x^iy^jz^l \mid 0\leq i \leq \eta -\rho_1, \, 0\leq j \leq \lambda-\rho_2, \, 0 \leq l \leq |\Gamma| - \rho_3 \rangle.
    \end{equation}
We define an evaluation code (see \cref{defn:evalcode}) $C^{f} := C(T,V)$ with evaluation set $T$ and vector space of polynomials $V$.
\end{definition}

\begin{remark}
    \label{rem:genKummer:length}
    From \cref{rem:genKummer:Tgamma:cardinality}, it directly follows that a code as in \cref{def:genKummer} has length $n := |T| \geq \eta \lambda |\Gamma|$. Indeed, by \cref{rem:genKummer:Tgamma:cardinality} we have that $|T_\gamma| \geq \eta\lambda$ and any two distinct sets $T_{\gamma}\subseteq T$ are disjoint.
\end{remark}

\subsection{Dimension, locality and lower bounds for the minimum distance} 
\label{subsec:genKummer:parameters}

In this subsection, which is the counterpart of \cref{subsec:AS:parameters}, we discuss the parameters of a code $C^{f}$ as in \cref{def:genKummer}. We start by computing a bound on the minimum distance $d$ of $C^{f}$, which is derived by applying B\'ezout's theorem on each fiber $\ZZ$ and subtracting the minimum vanishing multiplicity of the functions in $V$ at $P_\gamma$, for all $\gamma\in \Gamma$. 

Let $M\coloneqq \eta + \lambda - \rho_1 - \rho_2$ and, similarly to \cref{eq:V:homog}, define the $\mathbb{F}_{q}$-vector space of (homogeneous) forms
\begin{equation}
\label{eq:V:homog:genKummer}
    V_H\coloneqq \langle x^iy^jw^\ell : 0\leq i\leq \eta-\rho_1, \ 0\leq j\leq \lambda-\rho_2,\ i+j+\ell=M\rangle.
\end{equation}
As noted in \cref{subsec:AS:parameters}, in the discussion following \cref{eq:V:homog}, any function $g(x,y,z)\in V$ restricts to $\Zgamma$ as a function $g_{\gamma}(x,y)=g(x,y,\gamma)$, and each such function can be written as $g_{\gamma}=G_\gamma(x,y,1)$ for some $G_\gamma\in V_{H}$, which is the homogenization of $g_\gamma$ with respect to some variable $w$.
The vanishing of $G_\gamma$ on affine points of $\ZZ$ with $w=1$ is the same as that of $g_{\gamma}$ on $\Zgamma$. Moreover, the vanishing multiplicity of $g\in V$ at $P_\gamma$ is also described by the vanishing multiplicity of $G_\gamma$ at $P_\gamma$, which is exactly the intersection multiplicity of the curve defined by $G_\gamma$ and $\ZZ$ at the origin, in the affine chart with $x=1$ of $\P^2_\gamma$.
We hence obtain the following lemma and corollary, which are the counterpart of \cref{lemma:vanishing:at:infty} and \cref{cor:vanishing}.

\begin{lemma}
\label{lemma:vanishing:at:infty:genKummer}
    Let $\overline{\calZ}_\gamma$ and $P_\gamma\coloneqq (1:0:0)$ be as defined above, and let $V_{H}$ be the $\mathbb{F}_{q}$-vector space of forms defined in \cref{eq:V:homog:genKummer}. Let $\mathcal{G}$ denote the curve in $\mathbb{P}^2_\gamma$ defined by the vanishing of the form $G \in V_{H}\setminus \{0\}$. Then 
    \begin{equation*} 
    \mathrm{min}_{G\in V_{H}\setminus \{0\}}\{i_{P_\gamma}(\mathcal{G}, \ZZ)\}=(\lambda-\mu)(\lambda-\rho_2). 
    \end{equation*}
\end{lemma}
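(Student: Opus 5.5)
The plan is to follow the proof of \cref{lemma:vanishing:at:infty}, with the roles of the two special lines adapted via \cref{lemma:genKummer:multiplicities}. We work in the affine chart $x=1$ of $\P^2_\gamma$, where $P_\gamma$ is the origin with local coordinates $(y,w)$. A form $G=\sum a_{i,j}x^iy^jw^{M-i-j}\in V_H$ dehomogenizes to $\sum a_{i,j}\,y^jw^{\ell}$ with $\ell=M-i-j$. By \cref{lemma:genKummer:multiplicities}, $i_{P_\gamma}(\{y=0\},\ZZ)=\lambda-\mu$ and $i_{P_\gamma}(\{w=0\},\ZZ)=\lambda$. Since $x$ is a unit at $P_\gamma$, the monomial $x^iy^jw^\ell$ meets $\ZZ$ at $P_\gamma$ with multiplicity $j(\lambda-\mu)+\ell\lambda$.

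\textbf{Upper bound.} Take $G=x^{\eta-\rho_1}y^{\lambda-\rho_2}$, so $i=\eta-\rho_1$, $j=\lambda-\rho_2$, $\ell=0$. This $G$ lies in $V_H$ and gives intersection multiplicity exactly $(\lambda-\mu)(\lambda-\rho_2)$, so the minimum is at most this value.

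\textbf{Minimization over monomials.} Write $\ell=(\eta-\rho_1-i)+(\lambda-\rho_2-j)\ge 0$. Then
\[
j(\lambda-\mu)+\ell\lambda=(\lambda-\mu)(\lambda-\rho_2)+\mu(\lambda-\rho_2-j)+\lambda(\eta-\rho_1-i)\ \ge\ (\lambda-\mu)(\lambda-\rho_2),
\]
with equality only for the monomial used above.

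\textbf{Lower bound for arbitrary $G$.} This is the step I expect to be the main obstacle. In the Artin--Schreier case the paper tacitly assumes that the multiplicity of a sum of monomials is at least the minimum over its monomials, but $P_\gamma$ may be a singular point with several branches. I would justify it via branches. Write $i_{P_\gamma}(\mathcal G,\ZZ)=\sum_B v_B(G)$, summing over the branches $B$ of $\ZZ$ at $P_\gamma$, where each $v_B$ is a discrete valuation and hence satisfies $v_B(G)\ge\min v_B(\text{monomials})$. Near $P_\gamma$ the equation of $\ZZ$ reads $y^\lambda=u\cdot w^{\lambda-\mu}$ with $u$ a unit; the product $\prod(1-a_i\gamma w)$ and the constant $c\gamma^\nu$ are units because $\gamma\neq0$. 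Hence $\lambda\, v_B(y)=(\lambda-\mu)\,v_B(w)$ on every branch, so
\[
v_B(y^jw^\ell)=v_B(y)\Bigl(j+\tfrac{\lambda}{\lambda-\mu}\,\ell\Bigr).
\]
This is proportional, with the same factor $v_B(y)>0$ on all branches, to $j(\lambda-\mu)+\ell\lambda$. Consequently the same monomial $y^{\lambda-\rho_2}$ minimizes $v_B$ on every branch. Summing over branches, and using $\sum_B v_B(y)=\lambda-\mu$, gives $i_{P_\gamma}(\mathcal G,\ZZ)\ge(\lambda-\mu)(\lambda-\rho_2)$ for every nonzero $G\in V_H$. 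Together with the upper bound, this proves the lemma.
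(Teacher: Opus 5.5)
Your proposal is correct and follows the paper's proof: both read off the multiplicity $j(\lambda-\mu)+\ell\lambda$ of each monomial $x^iy^jw^\ell$ at $P_\gamma$ from \cref{lemma:genKummer:multiplicities} and minimize it over the allowed exponents. Your additions make the argument complete where the paper is silent. The explicit form $x^{\eta-\rho_1}y^{\lambda-\rho_2}$ gives the upper bound. The branch-valuation argument ($\lambda v_B(y)=(\lambda-\mu)v_B(w)$ on every branch, so one monomial is minimal on all branches at once) justifies the step ``multiplicity of $G$ $\geq$ minimum over its monomials,'' which the paper takes for granted and which genuinely needs justification, since $P_\gamma$ has $\gcd(\lambda,\mu)$ branches in general (e.g.\ $m$ branches in the example of \cref{sec:KummerEx}).
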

\begin{proof}
    Let \begin{equation*} G(x,y,w)=\sum_{i=0}^{\eta-\rho_1}\sum_{j=0}^{\lambda-\rho_2}b_{i,j}x^iy^jw^{M-i-j}, \end{equation*} where $b_{i,j}\in \F_{q}$ and at least one $b_{i,j}\neq 0$.
   
    Observe that, by \cref{lemma:genKummer:multiplicities}, the degree-1 form $y$ vanishes with multiplicity $\lambda-\mu$ at the point $P_\gamma$ of $\overline{\calZ}_\gamma$ and the degree-1 form $w$ vanishes with multiplicity $\lambda$ at the point $P_\gamma$ of $\overline{\calZ}_\gamma$.  Thus if $b_{i,j}\neq 0$, the monomial $b_{i,j}x^iy^jw^{M-i-j}$ vanishes with multiplicity $j(\lambda-\mu)+(M-i-j)\lambda$ and the degree-$M$ form $G$ vanishes with multiplicity at least \[\min\{j(\lambda-\mu)+(M-i-j)\lambda:0\leq i\leq\eta-\rho_1, 0\leq j\leq \lambda-\rho_2\}\] at $P_{\gamma}$. This minimum is $(\lambda-\mu)(\lambda-\rho_2)$, since $\lambda-\mu < \lambda$. 
\end{proof}

\begin{corollary}\label{cor:vanishing:genKummer}
Every non-zero function in $V_H$ vanishes in at most $M\lambda-(\lambda-\mu)(\lambda-\rho_2)$ points of $\ZZ$ aside from $P_{\gamma}$.    
\end{corollary}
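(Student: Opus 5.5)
The plan is to apply Bézout's theorem on $\P^2_\gamma$ and then remove the contribution of the point at infinity using \cref{lemma:vanishing:at:infty:genKummer}, exactly as in \cref{cor:vanishing} for the Artin--Schreier case.

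Fix a nonzero $G\in V_H$ and let $\mathcal{G}\subset\P^2_\gamma$ be the curve it defines, of degree $M$. The curve $\ZZ$ has degree $\lambda$, by \cref{eq:Zgamma:proj:closure} and \cref{eq:Fgamma}.

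The first step is to check that $G$ and the defining polynomial $y^\lambda-F_\gamma(x,w)$ share no common factor. If this polynomial is irreducible, we only need $G$ not to be a multiple of it. This holds because every monomial of $G$ has $y$-degree at most $\lambda-\rho_2<\lambda$, so $y^\lambda-F_\gamma$ cannot divide $G\neq 0$. Irreducibility follows from the Kummer condition on $f(x,\gamma)$, which has a simple root at some $a_i\gamma$ (see \cref{rem:Kummer:fibers}). One subtlety is the degenerate case $h=\mu$, with no linear factors; I would note that the hypothesis $h\le\mu-1$ excludes it.

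The second step is to apply Bézout. It gives
\[
\sum_{P\in\mathcal{G}\cap\ZZ} i_P(\mathcal{G},\ZZ)=M\lambda .
\]
By \cref{lemma:genKummer:multiplicities}, $P_\gamma$ is the only point of $\ZZ$ on the line $w=0$. By \cref{lemma:vanishing:at:infty:genKummer}, its contribution satisfies $i_{P_\gamma}(\mathcal{G},\ZZ)\ge(\lambda-\mu)(\lambda-\rho_2)$. (If $P_\gamma\notin\mathcal{G}$, that lemma's minimum would read $0$; but since the minimum over $V_H$ is attained, the stated lower bound holds for every nonzero $G$.) The remaining points all lie in the affine chart $w=1$, and each contributes intersection multiplicity at least $1$.

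Combining these, the number of points of $\ZZ\cap\mathcal{G}$ other than $P_\gamma$ is at most $M\lambda-(\lambda-\mu)(\lambda-\rho_2)$, which is the claim. The only step requiring care is the coprimality (irreducibility) check in the first step; the rest is a direct bookkeeping application of Bézout's theorem.
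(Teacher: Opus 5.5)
Your proposal is correct and is essentially the paper's argument: the paper's proof is just Bézout's theorem for $\mathcal{G}$ and $\ZZ$ in $\P^2_\gamma$ (total intersection $M\lambda$), minus the contribution $i_{P_\gamma}(\mathcal{G},\ZZ)\ge(\lambda-\mu)(\lambda-\rho_2)$ from \cref{lemma:vanishing:at:infty:genKummer}. Your coprimality check (irreducibility of $y^\lambda-F_\gamma$ via the simple factor $x-a_1\gamma w$, guaranteed by $h\le\mu-1$, together with $\deg_y G\le\lambda-\rho_2<\lambda$) supplies a Bézout hypothesis that the paper leaves implicit; your parenthetical about the minimum being attained is unnecessary, since a minimum over all nonzero $G$ already bounds each such $G$ from below.
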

\begin{proof}
    Follows directly from B\'ezout's theorem.
\end{proof}

From this, we get the counterpart of \cref{cor:mindist} for a code $C^f$ as in \cref{def:genKummer}.

\begin{corollary}\label{cor:mindist:genKummer}
    The minimum distance of $C^f$ is at least 
        $\rho_3\left(\eta\lambda - M\lambda + (\lambda-\mu)(\lambda-\rho_2)\right) = \rho_3 (\lambda \rho_1 - \mu(\lambda - \rho_2)).$
\end{corollary}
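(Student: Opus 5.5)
The plan is to copy the argument of \cref{cor:mindist}, with \cref{cor:vanishing:genKummer} replacing \cref{cor:vanishing}. Fix $g\in V\setminus\{0\}$ and write $g=\sum_{l=0}^{|\Gamma|-\rho_3} g_l(x,y)z^l$. Let $(x_0,y_0)$ be a pair at which some $g_l$ is nonzero. Then $g(x_0,y_0,z)$ is a nonzero univariate polynomial of degree at most $|\Gamma|-\rho_3$, so it vanishes for at most $|\Gamma|-\rho_3$ values of $z$. Hence at least $\rho_3$ values $\gamma\in\Gamma$ make $g_\gamma(x,y)=g(x,y,\gamma)$ a nonzero polynomial.

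Fix such a $\gamma$ and let $G_\gamma\in V_H$ be the homogenization of $g_\gamma$ in degree $M$. The monomials of $g_\gamma$ satisfy $i\le\eta-\rho_1$ and $j\le\lambda-\rho_2$, so $i+j\le M$ and $G_\gamma$ indeed lies in $V_H$. The zeros of $g$ on $T_\gamma$ are affine points of $\ZZ$ where $G_\gamma$ vanishes, and they are distinct from $P_\gamma$.

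I have to check that $G_\gamma$ has no common component with $\ZZ$, since Bézout requires this. As in the Artin--Schreier case, this holds because $\deg_y g_\gamma\le\lambda-\rho_2<\lambda$. Indeed, $\ZZ$ is irreducible: $f(x,\gamma)$ has the simple root $a_1\gamma$, so $y^\lambda-f(x,\gamma)$ is irreducible by Eisenstein's criterion. Since the minimal polynomial of $y$ over $\F_q(x)$ has degree $\lambda$, a nonzero polynomial of $y$-degree less than $\lambda$ cannot be divisible by $y^\lambda-f(x,\gamma)$.

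By \cref{cor:vanishing:genKummer}, $G_\gamma$ vanishes at no more than $M\lambda-(\lambda-\mu)(\lambda-\rho_2)$ points of $\ZZ$ other than $P_\gamma$. By \cref{rem:genKummer:Tgamma:cardinality}, $|T_\gamma|\ge\eta\lambda$. So $g$ is nonzero at least at
\[
\eta\lambda-M\lambda+(\lambda-\mu)(\lambda-\rho_2)
\]
points of $T_\gamma$. Substituting $M=\eta+\lambda-\rho_1-\rho_2$ gives
\[
\eta\lambda-M\lambda=\lambda(\rho_1+\rho_2-\lambda).
\]
Adding $(\lambda-\mu)(\lambda-\rho_2)$, the $\lambda^2$ and $\lambda\rho_2$ terms cancel, leaving $\lambda\rho_1-\mu(\lambda-\rho_2)$.

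This quantity is positive because $\rho_1\ge\lambda>\mu$ and $\rho_2\ge 2$, which give
\[
\lambda\rho_1-\mu(\lambda-\rho_2)>\lambda(\lambda-\mu)+\mu\rho_2>0.
\]
The sets $T_\gamma$ are disjoint, so summing over the at least $\rho_3$ good fibers gives weight at least $\rho_3(\lambda\rho_1-\mu(\lambda-\rho_2))$.

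The main obstacle is the Bézout step. It requires $G_\gamma$ and $\ZZ$ to share no component, which is handled by the irreducibility argument above. It also requires that the intersection multiplicity at $P_\gamma$ be correctly accounted for, and here I rely on \cref{lemma:vanishing:at:infty:genKummer}. If one worries about the edge case where a monomial with $\ell=0$ and $j=0$ gives multiplicity $0$ at $P_\gamma$, the minimum stated in that lemma still bounds the count in \cref{cor:vanishing:genKummer}, so I take it as given.
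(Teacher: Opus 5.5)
Your argument is correct and is essentially the paper's proof: you find at least $\rho_3$ fibers with $g_\gamma\neq 0$, apply Bézout on $\ZZ$ after subtracting the multiplicity $(\lambda-\mu)(\lambda-\rho_2)$ at $P_\gamma$ from \cref{lemma:vanishing:at:infty:genKummer}, use $|T_\gamma|\ge\eta\lambda$, and sum over the disjoint fibers. Your Eisenstein argument that $\ZZ$ is irreducible, so that $G_\gamma$ (of $y$-degree $<\lambda$) shares no component with it, supplies a Bézout hypothesis the paper leaves implicit; the one slip is that the first inequality in your positivity estimate should be $\ge$ rather than $>$ (it is an equality when $\rho_1=\lambda$), which changes nothing.
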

\begin{proof}
Let $g(x,y,z)\in V$.  As there are $|\Gamma|$ distinct $z$-values for points of the evaluation set $T$, and the maximum degree of $g$ in $z$ is $|\Gamma| - \rho_3$, there must be at least $\rho_3$ values of $\gamma\in\Gamma$ such that $g(x,y,\gamma)$ is not identically 0.  For each of these values of $\gamma$, $g$ has the same vanishing on $\Zgamma$ as the corresponding $G_{\gamma}\in V_{H}$ on affine points of $\ZZ$. Corollary \ref{cor:vanishing:genKummer} implies that $G_{\gamma}$ vanishes on at most $M\lambda-(\lambda-\mu)(\lambda-\rho_2)$ affine points of $\ZZ$, thus on at most this number of points of $\Zgamma$.  Therefore there must be $\eta\lambda - M\lambda + (\lambda-\mu)(\lambda-\rho_2) = \lambda \rho_1 - \mu(\lambda - \rho_2)$ evaluation points of $\Zgamma$ where $g$ does not vanish, and the desired result follows.
\end{proof}

\begin{remark}
    When $\rho_2 = \lambda$, the minimum computed in \cref{lemma:vanishing:at:infty:genKummer} is zero, since there are functions in $V$ (and hence corresponding forms in $V_H$) that do not vanish at $P_\gamma$. This means that the bound on the minimum distance obtained in \cref{cor:mindist:genKummer}, which is still positive, is simply the estimate coming from Bézout’s Theorem on the fibers $\Zgamma$.
\end{remark}

\begin{remark} 
\label{rem:genKummer:d:indep:bound}
The lower bound given on the minimum distance $d$ of a code $C^f$ in \cref{cor:mindist:genKummer} is always positive. More precisely, we have
\begin{align*}
    \eta\lambda - M\lambda + (\lambda-\mu)(\lambda-\rho_2) &= \eta\lambda - (\eta+ \lambda -\rho_1-\rho_2)\lambda + (\lambda-\mu)(\lambda-\rho_2)\\
    &= -\lambda^2 + (\rho_1+\rho_2)\lambda + (\lambda-\mu)(\lambda-\rho_2)\\
    &=\mu\rho_2 + \rho_1\lambda - \mu\lambda 
\end{align*}
and, since $\mu\rho_2 \geq \mu$ because $\rho_2\geq 2$, and $\rho_1\lambda - \mu\lambda \geq 0$ because $\rho_1\geq \lambda$, we hence have 
\begin{equation}
\label{eq:d:indep:bound:genKummer}
    \eta\lambda - M\lambda + (\lambda - \mu)(\lambda-\rho_2) \geq \lambda^2 - \mu\lambda + \mu >0.
\end{equation}
Note that \cref{eq:d:indep:bound:genKummer} implies, in particular, that we always have at least the lower bound $\lambda^2 - \mu\lambda + \mu$ for the minimum distance of a code $C^f$ defined in \cref{def:genKummer}, regardless of the choice of the parameters $\rho_1,\rho_2,\rho_3$. 
\end{remark}

\begin{remark}
    With our choice of the sets $T_\gamma$ in \cref{eq:genKummer:Tgamma}, we do not include in the set of evaluation points $T$ all the points in $\{(\bar{x},0,\gamma) \mid \bar{x}\in \F_{q}, \ f(\bar{x},\gamma)= 0\}$, for any $\gamma\in \Gamma$.
    Observe that these are exactly the $\F_q$-rational ramification points of the projection map $\pi_{x,z}$, for which our recovery procedure would not work. Indeed, the preimage of $(\bar{x},\gamma)$ by $\pi_{x,z}$ has cardinality strictly less than $\lambda$, for any $\bar{x}\in\mathbb{F}_{q}$ with $f(\bar{x},\gamma)=0$. 
    
    With respect to the approach used in \cref{lemma:vanishing:at:infty:genKummer}, considering these points does not improve the bound on minimum distances. In fact, there could be functions in $V$ vanishing at no point in $\{(\bar{x},0,\gamma) \mid \bar{x}\in \F_{q}, \ f(\bar{x},\gamma)= 0\}$. In fact, the only cases in which all functions in $V$ would vanish at all those points are those in which $\rho_1=\eta$, $\rho_3=|\Gamma|$ and $\rho_2\leq \lambda -1$, which are not particularly interesting since then the space of evaluation functions would simply be 
    $V:=\langle y^j \mid 0\leq j\leq \lambda - \rho_2  \rangle$.
\end{remark}

\cref{cor:mindist:genKummer} yields the lower bound $d \geq \rho_3\left(\eta\lambda - M\lambda + (\lambda-\mu)(\lambda-\rho_2)\right)$ for a code $C^f$ as in \cref{def:genKummer}. In the following theorem, which is the counterpart of \cref{thm:AS:parameters}, we show that any such code is an HLRC with lower codes supported on the fibers of $\pi_{x,z} \colon \calS_f \to \mathbb{A}^2_{x,z}$ and middle codes supported on the curves $\Zgamma$, see \cref{HLRCdef} and \cref{rem:middle:lower:codes}).

\begin{theorem}
\label{thm:genKummer:main}
    Let $\mathcal{S}_f$ and $\Zgamma$ be as in \cref{eq:genKummer} and \cref{eq:genKummer:Zgamma}, respectively. Let $C^f$ be an evaluation code from $\mathcal{S}_f$ as in \cref{def:genKummer}, obtained with any suitable choice of the parameters $\eta, \rho_1, \rho_2, \rho_3$.
    
    The code $C^f$ has hierarchical recovery with lower codes supported on the fibers of \linebreak $\pi_{x,z} \colon \calS_f \to \mathbb{A}^2_{x,z}$ and middle codes supported on the curves $\Zgamma$. More precisely, $C^f$ is an $[n,k,d]$-code with
    \begin{equation*}
        n=|T|\geq \eta\lambda |\Gamma|, \quad k=(\eta-\rho_1+1)(\lambda - \rho_2 + 1)(|\Gamma|-\rho_3 + 1), \quad d\geq \rho_3 \ \mathrm{max}\{\rho_1\rho_2, \   \lambda \rho_1 - \mu(\lambda - \rho_2)\}.
    \end{equation*}
    Moreover, with notations as in \cref{HLRCdef}, $C^f$ has local parameters
    \begin{equation*}
        ((n_1,k_1,d_1),(n_2,k_2,d_2)),
    \end{equation*}
    where
    \begin{align*}
        n_1&\geq \eta\lambda, &
        k_1&=(\eta-\rho_1+1)(\lambda - \rho_2 + 1), &
        d_1&= \mathrm{max}\{\rho_1\rho_2, \ \lambda \rho_1 - \mu(\lambda - \rho_2)\}, \\
        n_2&=\lambda, &
        k_2&=\lambda-\rho_2 +1, &
        d_2&= \rho_2.
    \end{align*}
\end{theorem}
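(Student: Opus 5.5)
The plan mirrors the proof of \cref{thm:AS:parameters}, with the Artin--Schreier action $y\mapsto y+\delta$ replaced by the Kummer action $y\mapsto \zeta y$.

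\textbf{Lower codes.} Fix a position $i$ corresponding to $P_i=(\alpha,\beta,\gamma)\in T$. Let $C^f_{2,i}$ be the puncture on the fiber $F_{2,i}=(\pi_{x,z}|_{\calS_f})^{-1}(\alpha,\gamma)\cap T$. Since $T_\gamma$ excludes the points with $y=0$, we have $f(\alpha,\gamma)\neq 0$. Hence $F_{2,i}=\{(\alpha,\zeta^k\beta,\gamma):0\le k<\lambda\}$, with $\zeta$ a primitive $\lambda$-th root of unity in $\Fq$. These are $\lambda$ points with pairwise distinct $y$-coordinates, so $n_2=\lambda$. For $g\in V$, the restriction $g(\alpha,y,\gamma)$ is a univariate polynomial in $y$ of degree at most $\lambda-\rho_2$. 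Lagrange interpolation from any $\lambda-\rho_2+1$ of these points recovers it, which gives $k_2=\lambda-\rho_2+1$ and $d_2=\rho_2$.

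\textbf{Middle codes, first bound.} Let $C^f_{1,i}$ be the puncture on $F_{1,i}=T_\gamma$. By \cref{rem:genKummer:Tgamma:cardinality}, $n_1\ge \eta\lambda$. The bound $k_1=(\eta-\rho_1+1)(\lambda-\rho_2+1)$ is immediate from the monomials of $V$ with $z$ fixed. To get $d_1\ge\rho_1\rho_2$, write
\[g_\gamma(x,y)=\sum_{j=0}^{\lambda-\rho_2} g_j(x)y^j,\qquad \deg g_j\le \eta-\rho_1.\]
For each $a\in\pi_x(T_\gamma)$, the set $W_a$ of points of $T_\gamma$ over $a$ has exactly $\lambda$ elements with distinct $y$-values, and there are at least $\eta$ such $a$. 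The layered interpolation then works as before. The values $g_j(a)$ are recovered unless at least $\rho_2$ positions in $W_a$ are erased. Each $g_j$ is recovered unless at least $\rho_1$ fibers $W_a$ fail. So at least $\rho_1\rho_2$ erasures are needed for recovery to fail, giving $d_1\ge \rho_1\rho_2$.

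\textbf{Middle codes, second bound.} \cref{cor:vanishing:genKummer} says a nonzero $g_\gamma$ vanishes on at most $M\lambda-(\lambda-\mu)(\lambda-\rho_2)$ affine points of $\ZZ$. Therefore $g_\gamma$ is nonzero on at least
\[\eta\lambda - M\lambda+(\lambda-\mu)(\lambda-\rho_2)=\lambda\rho_1-\mu(\lambda-\rho_2)\]
points of $T_\gamma$, using the computation in \cref{rem:genKummer:d:indep:bound}. Together with the previous step this gives $d_1=\max\{\rho_1\rho_2,\ \lambda\rho_1-\mu(\lambda-\rho_2)\}$.

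\textbf{Whole code.} Take $g\in V\setminus\{0\}$ and view it as a polynomial in $z$ of degree at most $|\Gamma|-\rho_3$ with coefficients in $\Fq[x,y]$. Then $g(x,y,\gamma)\neq 0$ as a polynomial for at least $\rho_3$ values $\gamma\in\Gamma$.

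\textbf{Main obstacle.} For those $\gamma$ we need nonvanishing \emph{on $T_\gamma$}, not merely as a polynomial. This is exactly where the middle-code bounds apply: the restriction of $g_\gamma$ to $T_\gamma$ is a nonzero codeword. To see this, note that if $g_\gamma$ vanished on all of $T_\gamma$, the interpolation argument would force all $g_j(a)=0$ for at least $\eta$ values of $a$. Since $\deg g_j\le\eta-\rho_1<\eta$, every $g_j$ would vanish identically, so $g_\gamma=0$. Each such fiber therefore contributes weight at least $d_1$, so the weight of $g$'s codeword is at least $\rho_3 d_1$.

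The same argument gives injectivity of $ev_T$ on $V$, since a nonzero $g$ has positive weight. Hence $k=\dim V=(\eta-\rho_1+1)(\lambda-\rho_2+1)(|\Gamma|-\rho_3+1)$, and $n=|T|\ge\eta\lambda|\Gamma|$ by \cref{rem:genKummer:length}.
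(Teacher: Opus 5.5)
Your proposal is correct and takes essentially the paper's route. The paper's proof is the Kummer analogue of the proof of \cref{thm:AS:parameters}: lower codes on the $\pi_{x,z}$-fibers via Lagrange interpolation in $y$ over the orbit $y\mapsto\zeta y$ (which has $\lambda$ distinct points because $T_\gamma$ omits the $y=0$ points), middle codes on $T_\gamma$ via layered interpolation and \cref{cor:vanishing:genKummer}, and the factor $\rho_3$ from the $z$-degree bound. The one small difference is that you show $g_\gamma$ does not vanish on all of $T_\gamma$ (hence injectivity of $ev_T$) by the interpolation argument rather than by the B\'ezout count plus the positivity in \cref{rem:genKummer:d:indep:bound}; this is equally valid and does not need $\rho_1\ge\lambda$.
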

\begin{proof}
    The proof of this theorem is, mutatis mutandis, entirely analogous to that of \cref{thm:AS:parameters}, so we omit it for conciseness.
\end{proof}

\section{Examples of HLRCs from Kummer-type surfaces}
\label{sec:KummerEx}

In this section, we present come explicit examples of HLRCs from surfaces with Kummer fibers. The inspiration for these examples stems from \cite{Vthesis} and \cite{BMV23}, where Weierstrass semigroups at all the points and the automorphism group of the curve $\calZ_1$ (see \cref{eq:ex:Kummer:Zgamma}) were studied. We refer to \cref{ss:ArtinSchreier&KummerCovers} for the notation and terminology on one-variable function fields used throughout this section.

Let $q\equiv 2 \pmod 3$ and $m\coloneqq (q+1)/3$. 
With notations as in \cref{sec:Kummer}, let 
\begin{equation*}
    f(x,z) := - z^mx^{2m} - x^mz^{2m} = - x^mz^m (x^m+z^m)
\end{equation*}
and consider the algebraic surface  
\begin{equation}
\label{eq:ex:Kummer} 
\mathcal{S}_f: y^{q+1} - f(x,z) =
y^{q+1}+z^mx^{2m}+x^mz^{2m}=0.
\end{equation}
over $\mathbb{F}_{q^2}$ in $\mathbb{A}^3_{(x,y,z)}$.

To construct evaluation codes on $\mathcal{S}_f$ as in \cref{def:genKummer}, we proceed as follows. To choose the set of evaluation points $T$, we consider the points in $\mathcal{S}_f(\mathbb{F}_{q^2})$ that lie on curves 
\begin{equation}
\label{eq:ex:Kummer:Zgamma}
\calZ_{\gamma}: y^{q+1}+\gamma^{m}x^{2m}+x^m \gamma^{2m}=0,
\end{equation}
for $\gamma \in \mathbb{F}_{q^2}\setminus\{0\}$.
With the notations introduced in \cref{subsec:agtools}, let $\overline{\calZ}_\gamma$ denote the projectivization of $\calZ_\gamma$ in $\P^2_\gamma$, and let $\calY_\gamma$ denote the normalization of $\overline{\calZ}_\gamma$.
In the following lemma, we count the number of $\F_{q^2}$-rational points of $\overline{\calZ}_\gamma$ and $\calY_\gamma$.

\begin{lemma}\label{lemma:pointCountKummer}
Let $\gamma \in \mathbb{F}_{q^2}\setminus\{0\}$.  
The curve $\overline{\calZ}_\gamma$ has $(q^3+2q^2+2q+7)/3$ $\mathbb{F}_{q^2}$-points and its normalization $\calY_\gamma$ has $(q^3+2q^2+4q+3)/3$ $\mathbb{F}_{q^2}$-points.
\end{lemma}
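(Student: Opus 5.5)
The plan is to reduce to $\gamma=1$, count affine points through the norm map $u\mapsto u^{q+1}$ from $\mathbb{F}_{q^2}^*$ onto $\mathbb{F}_q^*$, and then obtain the normalization count by counting branches at the singular points.

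\textbf{Reduction to $\gamma=1$.} Substituting $x=\gamma x'$ and $y=\gamma y'$ in \cref{eq:ex:Kummer:Zgamma} gives
\[
y'^{q+1}\gamma^{q+1}=-\gamma^{3m}\left(x'^{2m}+x'^m\right).
\]
Since $3m=q+1$, dividing by $\gamma^{q+1}$ shows that $\overline{\calZ}_\gamma\cong\overline{\calZ}_1$ by a linear change of coordinates of $\P^2_\gamma$ defined over $\mathbb{F}_{q^2}$. So it suffices to treat the curve $y^{q+1}=g(x):=-x^m(x^m+1)$. Note that here $\deg f(x,1)=3m=q+1=\lambda$.

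\textbf{Affine count.} Because $u\mapsto u^{q+1}$ maps $\mathbb{F}_{q^2}^*$ onto $\mathbb{F}_q^*$ with fibres of size $q+1$, each $x\in\mathbb{F}_{q^2}$ contributes $q+1$ points if $g(x)\in\mathbb{F}_q^*$, one point if $g(x)=0$, and none otherwise.

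The zeros of $g$ are $x=0$ and the solutions of $x^m=-1$. The latter number exactly $m$, because $(q^2-1)/m=3(q-1)$ is even, so $-1$ is an $m$-th power. This gives $m+1$ points with $y=0$.

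For the remaining $x$, put $t=x^m$. Then $t$ runs over the subgroup $H\subset\mathbb{F}_{q^2}^*$ of order $3(q-1)$, and each $t\in H$ arises from exactly $m$ values of $x$. The condition $t(t+1)\in\mathbb{F}_q$ reads $(t^q-t)(t^q+t+1)=0$, which splits into two cases.
\begin{itemize}
\item If $t\in\mathbb{F}_q^*$, then automatically $t\in H$, and excluding $t=-1$ leaves $q-2$ values.
\item If $t^q=-t-1$ with $t\notin\mathbb{F}_q$, then membership in $H$ means $(t^{q-1})^3=1$, i.e.\ $(-1-t)/t=\omega$ with $\omega$ a primitive cube root of unity. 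Since $q\equiv2\pmod 3$, $\omega\notin\mathbb{F}_q$. This forces $t\in\{\omega,\omega^2\}$, and indeed $t(t+1)=-1\in\mathbb{F}_q^*$.
\end{itemize}
In total there are $q$ admissible values of $t$, hence $qm(q+1)$ affine points with $y\neq0$.

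At infinity, setting $w=0$ leaves $y^{q+1}=0$, so $(1:0:0)$ is the unique point at infinity. Adding up,
\[
\frac{q(q+1)^2}{3}+m+1+1=\frac{q^3+2q^2+2q+7}{3}.
\]

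\textbf{Normalization.} Next I determine the singular points and count the $\mathbb{F}_{q^2}$-rational branches over each.
\begin{itemize}
\item Points with $y\neq0$ are nonsingular, since $y^{q+1}$ has nonzero derivative there.
\item Points $(x,0)$ with $x^m=-1$ are nonsingular, because they are simple roots of $g$.
\end{itemize}
So only the origin and the point at infinity need analysis.

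At $(0,0)$ the local equation is $y^{3m}+x^m u(x)=0$ with $u(0)=1$. Extracting an $m$-th root of the unit $u$ in the completed local ring, this factors as $\prod_{\xi^m=-1}\bigl(y^3-\xi x\,u^{1/m}\bigr)$. Each factor is an analytically irreducible, geometrically unibranch component. All $\xi$ lie in $\mathbb{F}_{q^2}$ by the computation above, so there are exactly $m$ rational points of $\calY_\gamma$ over the origin.

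In the chart $x=1$ the point at infinity has local equation $y^{q+1}+w^m+w^{2m}=0$. This is the same shape, so again there are $m$ rational branches. This is the one place where the hypothesis $\mu<\lambda$ of \cref{lemma:genKummer:multiplicities} fails, since here $\mu=\lambda$, so I will do this chart by hand rather than invoke that lemma.

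Replacing two points by $m$ points each gives
\[
\frac{q^3+2q^2+2q+7}{3}+2(m-1)=\frac{q^3+2q^2+4q+3}{3}.
\]

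\textbf{Main obstacle.} The delicate step is the branch analysis at the two singular points. I must justify that $y^{3m}+x^m u$ splits into exactly $m$ unibranch factors, each with residue field $\mathbb{F}_{q^2}$. Both facts follow because $3\nmid m$ fails in general: $\gcd(3m,m)=m$, and $y^3-\xi x\cdot(\text{unit})$ is irreducible and smooth in the parameter $t$ with $x\sim t^3$ and $y\sim t$. As a cross-check, I would compare the result with the genus given by Riemann--Hurwitz for the Kummer cover and with the Hasse--Weil bound, \cref{thm:HasseWeil}.
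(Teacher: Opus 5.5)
Your proof is correct, but it runs in the opposite direction from the paper's. The paper never counts points directly. It uses the triple cover $\varphi_\gamma\colon\mathcal{H}_q\to\overline{\calZ}_\gamma$, together with the fact that a curve covered by a maximal curve is maximal, to get that $\calY_\gamma$ is $\F_{q^2}$-maximal. It then computes $g(\calY_\gamma)=(q^2-q+4)/6$ by Riemann--Hurwitz for the Kummer extension $\F_{q^2}(\overline{\calZ}_\gamma)/\F_{q^2}(x)$ and reads off $|\calY_\gamma(\F_{q^2})|=q^2+1+2gq$. Only then does it subtract $2(m-1)$ for the $m$ branches over each of $(0:0:1)$ and $(1:0:0)$.

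You instead count $\overline{\calZ}_\gamma(\F_{q^2})$ by hand, via the norm map and the subgroup $H$ of $m$-th powers, and then add $2(m-1)$. What your route buys:
\begin{itemize}
\item It is elementary and self-contained: no Hermitian cover, no maximality theorem, no genus.
\item It yields along the way $|T_\gamma|=q(q+1)m$ and $\eta=mq$, which the paper derives only after the lemma.
\item Your explicit factorization $\prod_{\xi^m=-1}(y^3-\xi x u^{1/m})$ with $\xi\in\F_{q^2}$ justifies that the branches over the two singular points are $\F_{q^2}$-rational, a point the paper leaves implicit.
\end{itemize}
The paper's route buys structural information, namely maximality and the genus of $\calY_\gamma$. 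In your approach these appear only a posteriori, by comparing your count with Riemann--Hurwitz and \cref{thm:HasseWeil}.

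A few statements in your write-up are wrong, although nothing depends on them.
\begin{itemize}
\item $\deg f(x,1)=\deg(-x^{2m}-x^m)=2m$, not $3m$. So $\mu=2m<\lambda=3m$, and \cref{lemma:genKummer:multiplicities} does apply at $P_\gamma$ (multiplicity $m$). The homogenization is $y^{3m}+x^{2m}w^m+x^mw^{2m}$. This is what your chart equation $y^{3m}+w^m+w^{2m}$ and your claim that $(1:0:0)$ is the only point at infinity implicitly use; with degree $3m$ both would fail.
\item $3(q-1)$ is odd when $q$ is even. There, however, $-1=1$, so $-1$ is trivially an $m$-th power.
\item The remark about $3\nmid m$ is spurious. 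Each factor $y^3-\xi X$, with $X=xu^{1/m}$ a regular parameter, is a smooth germ regardless of any divisibility.
\end{itemize}
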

\begin{proof}
We claim that $\overline{\calZ}_\gamma$ is covered by the Hermitian curve via an $\F_{q^2}$-rational covering map. By \cite[Theorem 10.2]{HKT}, this implies that $\overline{\calZ}_\gamma$ is maximal.
This yields that $\calY_\gamma$ is also maximal over $\F_{q^2}$, since it is birationally equivalent to $\overline{\calZ}_\gamma$.
Consider $\mathcal{H}_{q}: U^{q+1}+V^{q+1}+W^{q+1}=0$. Being a model of the Hermitian curve, $\mathcal{H}_{q}$ is maximal over $\mathbb{F}_{q^2}$. Moreover, the map $\varphi_{\gamma}: \mathcal{H}_{q}\rightarrow \overline{\calZ}_\gamma$ defined by
\[
U\mapsto U^3=x, \, V\mapsto UVW=y, \, W \mapsto \frac{1}{\gamma}W^3=w 
\]
is a triple cover, which proves our initial claim.

We now wish to count the $\F_{q^2}$-rational points of $\overline{\calZ}_\gamma$ and $\calY_\gamma$. We start by computing the number of $\F_{q^2}$-rational points of $\calY_\gamma$ with the Hasse-Weil Theorem and then deduce the cardinality of $\overline{\calZ}_\gamma(\F_{q^2})$. 
To this aim, we first compute the (geometric) genus $g(\calY_\gamma)$ of $\calY_\gamma$, that is, the genus of the function field $\F_{q^2}(\calY_\gamma)$. Note that $g(\calY_\gamma)$ is equal to the genus $g(\overline{\calZ}_\gamma)$ of $\overline{\calZ}_\gamma$, since the curves $\calY_\gamma$ and $\overline{\calZ}_\gamma$ are birationally equivalent, and hence their function fields $\F_{q^2}(\calY_\gamma)$ and $\F_{q^2}(\overline{\calZ}_\gamma)$ are isomorphic. See \cref{fig:genus} for a graphic summary of this discussion.

\begin{figure}[ht]
\adjustbox{center}{
\begin{tikzcd}
\mathcal{H}_q \arrow[d, "\varphi_\gamma"'] \arrow[rd, dotted]                        &                                       &  &                                                                                                                  & \mathbb{F}_{q^2}(\mathcal{H}_q) \arrow[ld, "3"', no head] \arrow[rd, "3", no head] &                                                                 \\
\overline{\mathcal{Z}}_\gamma \arrow[d, "\pi_x"'] \arrow[r, "\sim", no head, dashed] & \mathcal{Y}_\gamma \arrow[ld, dotted] &  & \mathbb{F}_{q^2}(\overline{\mathcal{Z}}_\gamma) \arrow[rd, "q+1"', no head] \arrow[rr, "\cong", no head, dashed] &                                                                                    & \mathbb{F}_{q^2}(\mathcal{Y}_\gamma) \arrow[ld, "q+1", no head] \\
\mathbb{P}^1_x                                                       &                                       &  &                                                                                                                  & \mathbb{F}_{q^2}(x)                                                                &                                                                
\end{tikzcd}
}
    \caption{The leftmost diagram summarizes the rational maps between the curves $\mathcal{H}_q$, $\overline{\mathcal{Z}}_\gamma$ and $\calY_\gamma$. The map $\pi_x$ is the projection onto the $x$-coordinate, the map $\varphi_\gamma$ is the triple cover defined above, and the horizontal dashed arrow represents the birational equivalence of $\overline{\mathcal{Z}}_\gamma$ and $\calY_\gamma$. The dotted arrows represent induced rational maps in the commutative diagram.
    The rightmost diagram is the \enquote{translation}, in the function field setting, of the leftmost diagram.  \label{fig:genus}}
\end{figure}
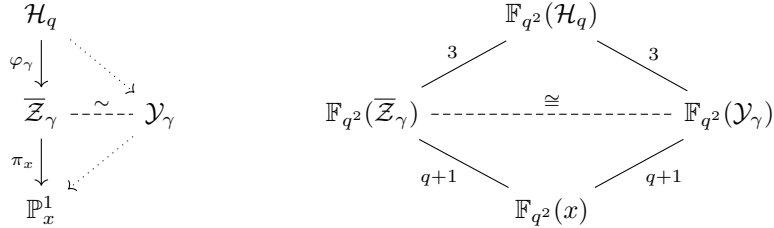

To compute the genus $g(\overline{\calZ}_\gamma)$ (that is, as observed, the genus of the function field $\F_{q^2}(\overline{\calZ}_\gamma)$), we apply the Riemann-Hurwitz formula (see \cite[Theorem 3.4.13]{Sti}) to the function field extension $\F_{q^2}(\overline{\calZ}_\gamma)/\F_{q^2}(x)$, which is a Kummer extension of degree $q+1$. By Kummer theory, it can be shown (see for instance \cite[Section 3.1]{Vthesis}) that the only ramified places in this extension are the zero $Q_0$ and pole $Q_\infty$ of $x$ and the $m$ distinct zeros $Q_1,\ldots, Q_m$ of $x^m + \gamma^m$. More precisely, both $Q_0$ and $Q_\infty$ have $m$ distinct extensions with ramification index $3$, while $Q_1,\ldots, Q_m$ are totally ramified. Hence, the Riemann-Hurwitz formula for $\F_{q^2}(\overline{\calZ}_\gamma)/\F_{q^2}(x)$ yields
\begin{align*}
    2(g(\overline{\calZ}_\gamma)-1) &= -2(q+1) + 2m + 2m + qm
    = \frac{q^2-q-2}{3},
\end{align*}
and hence we have $g(\calY_\gamma) = g(\overline{\calZ}_\gamma) = (q^2-q+4)/6.$

We can now compute the number of $\F_{q^2}$-rational points of $\calY_\gamma$, which by Hasse-Weil is
\begin{align*}
   |\calY_\gamma(\F_{q^2})| &= q^2 + 1 + 2 \cdot g(\calY_\gamma)\cdot q
   = q^2 + 1 + 2 \cdot \frac{q^2-q+4}{6}\cdot q\\
   &=\frac{q^3+2q^2+4q+3}{3}.
\end{align*}

To count the $\F_{q^2}$-rational points of $\overline{\calZ}_\gamma$, it now suffices to note that, except for the singular points $(1:0:0)$ and $(0:0:1)$, all the remaining $\F_{q^2}$-rational points of $\overline{\calZ}_\gamma$ are nonsingular and hence in one-to-one correspondence with $\F_{q^2}$-rational points of $\calY_\gamma$.

As already observed, from the discussion in \cite[Section 3.1]{Vthesis} it follows that there are exactly $m$ branches of $\overline{\calZ}_\gamma$ centered at the point $(0:0:1)$ and $m$ branches centered at $(1:0:0)$, whence we have $m$ points of $\calY_\gamma$ lying over $(0:0:1)$ and $m$ points of $\calY_\gamma$ lying over $(1:0:0)$. Indeed, the branches of $\overline{\calZ}_\gamma$ centered at $(0:0:1)$ (resp. $(1:0:0)$) are in one-to-one correspondence with the $m$ distinct places lying over the zero $Q_0$ (resp. the pole $Q_\infty$) of $x$ in $\F_{q^2}(\overline{\calZ}_\gamma)/\F_{q^2}(x)$. 
Therefore, the cardinality of $\overline{\calZ}_\gamma(\F_{q^2})$ is 
\[
    |\overline{\calZ}_\gamma(\F_{q^2})|= |\calY_\gamma(\F_{q^2})| - 2(m-1) = \frac{q^3+2q^2+2q+7}{3}.
\]
\end{proof}

To construct codes as in \cref{def:genKummer} from the surface $\mathcal{S}_f$ in this example, we consider the following sets $T_\gamma$ (see \cref{eq:genKummer:Tgamma}), for $\gamma\in \F_{q^2}\setminus\{0\}$,
\begin{equation*}
    T_\gamma \coloneqq  \{(x,y,\gamma)\in \calS(\F_{q^2}): (x,y)\in \calZ_\gamma(\F_{q^2})\}\setminus \left(\{(0,0,\gamma)\}\cup \{(a,0,\gamma) : a^m + \gamma^m = 0\} \right).
\end{equation*}
Observe that each such set $T_\gamma$ can be identified with the subset of $\overline{\calZ}_\gamma$ given by 
\[
    \overline{\calZ}_\gamma(\F_{q^2})\setminus \left( \{ (0:0:1), (1:0:0)\}\cup \{(a:0:1) :  a^m + \gamma^m = 0\}\right),
\]
which is exactly the set described in \cref{eq:genKummer:Tgamma:Pgamma} in the particular case we are considering in this section. Moreover, by \cref{lemma:pointCountKummer}, we can actually compute the cardinality of the sets $T_\gamma$ in our example. Indeed, by our assumptions on $q$, the equation $x^m + \gamma^m = 0$ has always $m$ distinct solutions over $\F_{q^2}$, hence the set $\{(a:0:1) :  \ a^m + \gamma^m = 0\}$ contains $m$ distinct points of $\overline{\calZ}_\gamma$. By Kummer theory (see the discussion in the proof of \cref{lemma:pointCountKummer} and \cite[Section 3.1]{Vthesis}), it is possible to see that each of these points is the center of a totally ramified place in $\F_{q^2}(\overline{\calZ}_\gamma)/\F_{q^2}(x)$.
We can then compute the cardinality of $T_\gamma$ using \cref{lemma:pointCountKummer}:
\begin{equation*}
    |T_\gamma| = |\overline{\calZ}_\gamma(\F_{q^2})| - 2 - m = \frac{q^3+2q^2+q}{3} = \frac{q(q+1)^2}{3} = q(q+1)m.  
\end{equation*}

Given $a \neq 0$ in $\mathbb{F}_{q^2}$, with $a^m+\gamma^m\neq 0$,
there are $q+1$ values of $y$ such that
$y^{q+1}+a^m\gamma^m(a^m + \gamma^m)=0$. 
In other words, 
\[
    |\{x\in\mathbb{F}_{q^2}: (x,y,\gamma)\in T_\gamma, \ \exists \ y\in \F_{q^2}\}|= \frac{|T_\gamma|}{q+1}=mq.
\]
In the notations of \cref{def:genKummer}, this means that we are setting $\eta \coloneqq  mq$, and the corresponding set $\Gamma$ is $\Gamma = \F_{q^2}\setminus\{0\}$. Writing $|T_\gamma|=\eta(q+1)$, we hence have that, for any choice of the parameters $\rho_1, \rho_2, \rho_3$, the length of codes $C^f$ constructed from $\mathcal{S}_f$ is equal to $|T|=|T_\gamma| \cdot |\Gamma| = \eta(q+1)(q^2-1)$. 

We let $q\neq 2$ and consider codes from $\mathcal{S}_f$ as in \cref{def:genKummer}, with $\eta$, $\Gamma$ and $T$ as just discussed, and with parameters $\rho_1, \rho_2, \rho_3$ such that $q+1 \leq \rho_1 \leq \eta  , \ 2 \leq \rho_2 \leq q+1, \ 1\leq \rho_3 \leq q^2-1.$
In this way, given any choice of $\rho_1, \rho_2, \rho_3$ as above, the space of evaluation polynomials of the corresponding code $C^f$ is 
   \begin{equation} \label{eqn:exKummer:V}
        V := V_{\rho_1,\rho_2, \rho_3} :=\langle x^iy^jz^k: 0\leq i\leq \eta-\rho_1, 0\leq j\leq q+1-\rho_2, 0\leq k \leq q^2-1-\rho_3\rangle.
    \end{equation}
Note that we are assuming $q\neq 2$ only to ensure that $\eta \geq q+1$, which is needed to be able to pick $\rho_1$ in the range $q+1 \leq \rho_1 \leq \eta$. 

The parameters of these codes $C^f$ follow from \cref{thm:genKummer:main}. However, similarly to what was done in \cref{sec:ASexample}, we now discuss some examples for specific values of the parameters $\rho_1, \rho_2, \rho_3$ that give particularly interesting results.

\begin{example}\label{ex:sharp:dist}

    For the codes considered in this section, the bound in \cref{cor:mindist:genKummer} reads as $d\geq (3\rho_1+2\rho_2-6m)m\rho_3$. Letting $\rho_1=\eta$, $\rho_2=q+1$ and $\rho_3=q^2-2$, we hence have that $d\geq \eta(q+1)(q^2-2)$.

    Note that, with this choice of $\rho_1,\rho_2, \rho_3$, we obtain a code $C^f$ with dimension $k=2$, where the space $V$ of evaluation functions is simply $V=\langle 1, z\rangle$. This is arguably a poor choice of the space of evaluation functions, especially since $z$ is constant on the sets $T_\gamma$. However, this example shows that the lower bound for $d$ from \cref{cor:mindist:genKummer} is sharp. Indeed, consider the polynomial $1-z\in V$ and note that it vanishes at $(\alpha,\beta,\gamma)\in T$ if and only if $\gamma=1$, that is, at exactly the points in $T_1\subseteq T$. The weight of the codeword associated to the evaluation of $1-z$ is then $n-|T_1|=\eta(q+1)(q^2-1)-\eta(q+1) = \eta(q+1)(q^2-2) $, which is exactly the bound from \cref{cor:mindist:genKummer} in this case.
\end{example}

\begin{example}
    For maximum dimension, and thus maximum rate, one should instead \\ choose the smallest possible values for the indices $\rho_1=q+1$, $\rho_2=2$, $\rho_3=1$. This gives an $[n,k,d]$-code with
    \begin{align*}
        n&=\eta(q+1)(q^2-1), & k_{\max} &= (\eta-q)(q + 1)(q^2-1), & d&\geq (q+1)^2 - m(q-1),
    \end{align*}
    where the information rate is approximately $1$.

    Moreover, since $\rho_3=1$, in this case the bound for the minimum distance of the full code coincides with the one for the minimum distance of the middle codes and, in fact, the minimum distance of the middle codes coincides with the minimum distance of the full code.
    Indeed, let $P(x,y)\in V$ be a polynomial that gives a codeword $c_P$ of minimum weight on a fiber $\tilde{\gamma}$. Since $\rho_3=1$, then the maximum degree in $z$ of a polynomial in $V$ is $q^2-2$ and we can consider the polynomial $\tilde{P}(x,y,z)\coloneqq (z-\gamma_1)\cdot \cdots  \cdot (z-\gamma_{q^2-2})\cdot P(x,y)\in V$, where $\gamma_i \neq \gamma_j$ for any $i\neq j$ and $\gamma_i\neq \tilde{\gamma}$ for all $i$. Observe that such polynomial vanishes identically on $q^2-2$ fibers, that is, on all the fibers except $\calZ_{\tilde{\gamma}}$. Since we know that on $\calZ_{\tilde{\gamma}}$ the restriction of $\tilde{P}$ gives a codeword of minimum weight, then this means that the codeword associated to $\tilde{P}$ in the full code has in fact the same weight. 
\end{example}

\begin{example}
    There are also choices of $\rho_i$ for which both rate and relative minimum distance are bounded away from $0$ as $q$ increases.
    
        If $q$ is even, choose $\rho_1=mq/2$, $\rho_2=1$, $\rho_3=q^2/2$ to get an $[n,k,d]$-code with
        \begin{align*}
            n&=\eta(q+1)(q^2-1)\approx\frac{q^5}{3}, & k &= \frac{q^2}{2} \left(\frac{mq}{2} + 1\right)(q+1) \approx \frac{q^5}{12}, & d&\geq m(q^2-3q) \frac{q^2}{4}\approx \frac{q^5}{12}.
        \end{align*}
        
        If $q$ is odd, choose $\rho_1=m(q-1)/2$, $\rho_2=1$, $\rho_3=(q^2-1)/2$ to get an $[n,k,d]$-code with
        \begin{align*}
            n&=\eta(q+1)(q^2-1)\approx\frac{q^5}{3}, \quad k= \frac{q^2+1}{2}\left(\frac{3m^2}{2} + 1\right)(q+1) \approx \frac{q^5}{12},\\  d &\geq m(q^2-4q-1) \frac{q^2-1}{4} \approx \frac{q^5}{12}.
        \end{align*}
  
    For large $q$, this yields asymptotically a value of $1/4$ for both information rate and relative distance.
\end{example}

\section{Observations, Generalizations, and Next Steps}

The general approach that motivates this paper is creating an evaluation code on a surface in $\A^3$ with a fibration into curves $\Zgamma$, where each curve has some regularity of structure and comes equipped with a natural cover $\phi_{\gamma}:\Zgamma\rightarrow\A^1$, and the fibers $\phi_{\gamma}^{-1}(y)$ for many $y\in\A^1$ are of equal cardinality over our field of interest across curves $\Zgamma$.  A small recovery group for a position corresponding to point $P$ is the set of positions corresponding to points on the fiber $\phi_{\gamma}^{-1}(\phi_{\gamma}(P))$, and a larger recovery group for that position corresponds to the set of evaluation points on $\Zgamma$. Functions for the code must be constructed so that, when any function is restricted to the nested fibers, the dimension of the punctured code is small enough to allow recovery of the desired number of positions. This paper describes how projectivizing each curve $\mathcal{Z}_{\gamma}$ can yield improvements in minimum distance bounds and dimension of the corresponding curve, and how, in our examples, a line on the projective surface can yield a fibration with the desired properties. Surfaces with Artin-Schreier and Kummer fibrations are excellent starting examples of our general approach to obtaining hierarchical codes. However, this general method can be applied much more widely, and some observations can be made on what would yield good codes in this context. 

Our first observation is that minimum distance bounds and dimension are largest in this construction when curves $\Zgamma$ have many evaluation points (requiring many points) relative to the degree of the curve.  This is because if an evaluation function $f$ restricts to a function of degree $e$ on $\Zgamma$, and the degree of the curve $\Zgamma$ is $s$, the function has at most $se$ zeros on $\Zgamma$ (minus the number of intersections at infinity).  If the number of evaluation points on $\Zgamma$ is large relative to $s$, this allows us to use evaluation functions of larger degree while still guaranteeing non-zero values, yielding a code with larger dimension.  Alternatively, a larger gap between $se$ and the number of evaluation points on $\Zgamma$ yields a larger minimum distance bound.  

However, we must still consider how this interacts with the length $n$ of the code, and we see as follows that  a curve with many points and a larger genus can yield a good rate.  The Hasse-Weil bound in Theorem \ref{thm:HasseWeil} states that the number of (projective) points over $\F_q$ for a curve of genus $g$ is at most $q+1+2g\sqrt{q}$.  For a maximal curve $\Zgamma$ with $g\geq 2\sqrt{q}$, the last term is dominant, so there are on the order of $2g\sqrt{q}$ points on $\Zgamma$.  For a smooth plane curve $C$ of degree $s$, the Pl\"{u}cker formula yields $g=\frac{(s-1)(s-2)}{2}$. So we use the heuristic that the genus grows approximately as $s^2/2$. In this regime we have around $s^2\sqrt{q}$ points on $\Zgamma$. Thus we need $se\leq s^2\sqrt{q}$. If we are seeking maximal rate, we would take $e$ as large as possible, so on the order of  $s\sqrt{q}$.  We assume that $\Zgamma\in\A^3_{x,y}$ and that the projection map is of the form $\phi_{\gamma}(x,y)=y$ and that this map has degree $t$, with $t\ll e$. This makes sense because the smallest recovery set had size $t$ and it is desirable for this set to be fairly small.  Then, the bound on the degree in $x$ for a function $f$ is on the order of $e$. With all of these assumptions, we then have that the rate of the full code is approximately $\frac{et|\Gamma|}{s^2\sqrt{q}|\Gamma|}=\frac{s\sqrt{q}t}{s^2\sqrt{q}}=\frac{t}{s}$.  When $t$ is close to $s$, we can achieve rates near 1.

A second observation is that if $C$ is a curve in $\A^2_{(x,y)}$, our framework can be easily applied to the cone over $C$ in $\A^3_{(x,y,z)}$.  This means that if $C$ is a curve with high genus, many points relative to its genus, and a cover of $\A^1$ with regular fibers over the desired field, our construction gives an easy way to construct a code with hierarchical recovery and good parameters.  The Hermitian curve is the maximal curve of maximal genus over $\mathbb{F}_{q^2}$, so this is a logical example to consider.
\begin{example}
    \label{rem:hermitian:cone}
    As in \cref{ex:hermitian}, let $\mathcal{H}_q$ be the Hermitian curve over $\F_{q^2}$ defined by the affine equation $y^q+y = x^{q+1}$. We consider the surface $\mathcal{S}_q$ in $\A^3$ defined by this same equation, i.e. the cone over the Hermitian curve.   

    Let \[T=\{(x,y,z): x,y,z\in\mathbb{F}_{q^2}, y^q+y=x^{q+1}\}\] be the set of evaluation points for the code, and let 
    \[V=V_{\rho_1,\rho_2,\rho_3}= \langle x^iy^jz^k:0\leq i\leq q^2-\rho_1, 0\leq j\leq q-\rho_2, 0\leq k\leq q^2-\rho_3\rangle,\] where $\rho_1\geq 2$, $\rho_2\geq 2$, $\rho_3\geq 1$, and $(q+1)\rho_2+q\rho_1>q^2+q$.

    For each $\gamma\in\mathbb{F}_{q^2}$, $\Zgamma$ is a copy of $\mathcal{H}_q$, $\Zgamma(\F_{q^2})=q^3$.  Note that the projectivization of $\Zgamma$ is tangent to the line at infinity and intersects it at the point $[0:1:0]$ with multiplicity $q+1$.  The map $\phi_{\gamma}:\Zgamma\rightarrow\A^1$ is given by $(x,y)\mapsto y$.  By arguments virtually identical to those in the Artin-Schreier case, we obtain hierarchical codes with length $n=q^5$, $k=(q^2-\rho_1+1)(q-\rho_2+1)(q^2-\rho_3+1)$, $d\geq\rho_3\max\{\rho_1\rho_2,\rho_1q-(q+1)(q-\rho_2)\}$.  The local parameters are also as in Theorem \ref{thm:AS:parameters}, with $q$ instead of $p$, $s=q+1$ and $\eta=q^2$.

    Note that we can choose $\rho_1=q$,  $\rho_2=2$, and $\rho_3=1$ to yield a code with rate on the order of 1.  
    \end{example}

\begin{remark}
     It is interesting also to consider the Hermitian curve as a Kummer curve and to discuss the parameters of codes $C^f$ obtained with our construction in \cref{def:genKummer} from the cone over the Hermitian curve $\mathcal{H}_q$ with map $\phi_{\gamma}(x,y)=x$. 

    To have the set $\Gamma$ as large as possible, we let $\eta:=q(q-1)$. Indeed, in this way $\Gamma = \F_{q^2}$, since for any $\gamma\in \F_{q^2}$ the set $T_\gamma$ as in \cref{eq:genKummer:Tgamma} on the fiber $\mathcal{H}_q \times \{z=\gamma\}$ consists of exactly all the affine rational points of $\mathcal{S}_q$ for which the projection map onto the $x$-coordinate is unramified. This means in particular that $|T_\gamma| = q^3-q = q(q-1)(q+1)$.

    Moreover, in the notations of \cref{eq:genKummer}, we have $\lambda = q+1$ and $\mu = q$. Choosing parameters $\rho_1= \lambda = q+1$, $\rho_2 = 2$, $\rho_3 = 1$, which is the choice yielding the largest bound on the minimum distance, we obtain a code $C^f$ with length $n = q^2(q^3-q)$, dimension $k = (q^2-2q)(q-1)(q^2-1)$, and minimum distance $d \geq (q+1)^2 - q(q-1) = 3q + 1$.

    Note that this code is longer than the code with same parameters $\rho_1, \rho_2, \rho_3$ obtained in \cref{sec:KummerEx}. Indeed, even though the equation of the surface considered in the examples of \cref{sec:KummerEx} has same $\lambda$ and $\mu$ as in this case, each set $T_\gamma$ has a smaller cardinality, and the set $\Gamma$ has a smaller cardinality as well. Therefore, it is not immediate to compare codes obtained from the Hermitian cone considered here and codes from \cref{sec:KummerEx}.

\end{remark}       

The cone construction described here has the potential to yield good hierarchical codes, but lacks the flexibility of the more general surface framework.  For example, in the surfaces of our example families in Sections \ref{sec:ASexample} and \ref{sec:KummerEx} are symmetrical with respect to $x$ and $z$, so admit two orthogonal fibrations into curves which both partition the fibers of constant $y$.  These curves could yield hierarchical codes with multiple choices of middle code, i.e. codes with local recovery and availability at the middle level.  Exploring the possibilities of recovery structures obtainable from these and related algebraic geometric codes is fertile ground for future work.

\section*{Acknowledgments}
This work stems from the project \emph{\enquote{Locally Recoverable Codes from higher-dimensional varieties}} led by B. Malmskog, C. Salgado and L. Vicino during the workshop \emph{Diversity in Finite Fields and Coding Theory} hosted at IMPA, Rio de Janeiro (Brazil), July 7-12 2024. It is our pleasure to thank IMPA for the hospitality.

C. Araujo was partially supported by CNPq and FAPERJ.

L. Costa was partially supported by CAPES.

B. Malmskog was partially supported by National Science Foundation award number DMS-2137661.

C. Salgado and L. Vicino were partially supported by an NWO Open Competition ENW – XL grant (project \enquote{Rational points: new dimensions} - OCENW.XL21.XL21.011).

Part of this work was done while B. Malmskog visited C. Salgado at the IAS, in Princeton. They thank the institute for their hospitality. During her stay at the IAS, C. Salgado was partially supported by both the James D. Wolfensohn Fund and the National Science Foundation under Grant No. DMS-1926686.

\end{document}